\documentclass[11pt,reqno]{amsart}
\usepackage{amsmath,mathtools}
\usepackage{graphicx}
\usepackage{epsfig,epsf,psfrag}
\usepackage{amsfonts}
\usepackage{setspace}
\usepackage{color}
\usepackage[font=small]{caption}
\usepackage[font=footnotesize]{subcaption}
\usepackage[top=1in, bottom=1in, left=1.2in, right=1.2in]{geometry}
\usepackage{afterpage}

\usepackage{isomath}

\setlength{\topmargin}{-1.5cm}
\setlength{\oddsidemargin}{0.0cm}
\setlength{\evensidemargin}{0.0cm}
\setlength{\textwidth}{16.7cm}
\setlength{\textheight}{23cm}
\headheight 20pt
\headsep    26pt
\footskip 0.4in

\setlength{\itemsep}{0pt}
\setlength{\parsep}{0pt}
\setlength{\parskip}{2pt}

\newcommand{\vct}{\vectorsym}
\newcommand{\mtx}{\matrixsym}

\newcommand{\er}{\vectorsym{e}_r}
\newcommand{\et}{\vectorsym{e}_{\theta}}
\newcommand{\ez}{\vectorsym{e}_z}

\newcommand{\bn}{\vectorsym{n}}
\newcommand{\bt}{\vectorsym{t}}
\newcommand{\bx}{\vectorsym{x}}
\newcommand{\by}{\vectorsym{y}}

\newcommand{\bJ}{\vectorsym{J}}

\newcommand{\lp}{\left(}
\newcommand{\rp}{\right)}
\newcommand{\usc}{\boldsymbol u^{\rm sc}}
\newcommand{\bpsi}{\boldsymbol{\psi}}
\newcommand{\bta}{\boldsymbol{\tau}_1}
\newcommand{\btb}{\boldsymbol{\tau}_2}
\newcommand{\bti}{\boldsymbol{\tau}_i}
\newcommand{\mi}{\mathrm{i}}
\newcommand{\kp}{\kappa_{\mathfrak p}}
\newcommand{\ks}{\kappa_{\mathfrak s}}

\renewcommand{\phi}{\varphi}

\newtheorem{theorem}{Theorem}[section]

\newtheorem{lemma}[theorem]{Lemma}

\newtheorem{remark}[theorem]{Remark}

\title[]{A fast solver for elastic scattering from 
   axisymmetric objects by boundary integral equations} 

%\numberwithin{equation}{section}

\author{Jun Lai}
\address{School of Mathematical Sciences, Zhejiang University,
	Hangzhou, Zhejiang 310027, China}
\email{laijun6@zju.edu.cn}

\author{Heping Dong}
\address{School of Mathematics, Jilin University, Changchun, Jilin 130012,
	China}
\email{dhp@jlu.edu.cn}

\thanks{The work of JL was partially supported by the Funds for Creative Research Groups of NSFC (No. 11621101) and NSFC grant No. 11871427. The work of HD was partially supported by NSFC grant No. 11801213 and the National Key Research and Development Program of China (grant No. 2020YFA0713602).}

\subjclass[2020]{35J05, 45L05, 45E05, 65R20, 75B05}

\keywords{elastic wave scattering, boundary integral equations, Nystr\"om
	method, Helmholtz decomposition, Navier equations}

\date{\today}

\begin{document}

\maketitle 
\begin{abstract}
  Fast and high-order accurate algorithms for three dimensional elastic scattering are of great importance when modeling  physical phenomena in mechanics, seismic imaging, and many other fields of applied science. In this paper, we develop a novel boundary integral formulation for the three dimensional elastic scattering based on the Helmholtz decomposition of elastic fields, which converts the Navier equation to a coupled system consisted of Helmholtz and Maxwell equations. An FFT-accelerated separation of variables  solver is proposed to efficiently invert boundary integral formulations of the coupled system for elastic scattering from axisymmetric rigid bodies. In particular, by combining the regularization properties of the singular boundary integral operators and the FFT-based fast evaluation of modal Green's functions, our numerical solver can rapidly solve the resulting integral equations with a high-order accuracy. Several numerical examples are provided to demonstrate the  efficiency and accuracy of the proposed algorithm, including geometries with corners at different wave number.
\end{abstract}

%\onehalfspacing

\section{Introduction}
\label{sec_intro}
Recently the phenomena of elastic scattering by obstacles have received much
attention due to the significant applications in various scientific areas such
as geological exploration, nondestructive testing, and medical diagnostics
\cite{ABG-15,HKS-IP13}.  An accurate and efficient numerical method
plays an important role in many of these applications. For instance, in the area of inverse elastic scattering, a large amount of forward simulations are needed in the iterative-type inversion algorithm \cite{DLL2019, DLL2020}. This
paper is concerned with the elastic scattering of a time-harmonic incident wave by a rigid obstacle embedded in a homogeneous and isotropic elastic medium in three dimensions. We propose a novel boundary integral formulation for the governing equation, that is, the Navier equation, and develop a highly accurate numerical method for solving the elastic scattering problem from axisymmetric bodies. 

Given the importance of elasticity, there are many mathematical and computational results available for the scattering problems of elastic waves \cite{AH-SIAP76,  L-SIAP12,  L2014, PV-JASA}.  Among these results, the
method of boundary integral equations offers an attractive approach for
solving  the obstacle scattering problems. Not only does it provide many analytic insights for the solution of elastic scattering, from the computational point of view, it also has the advantages that the discretization is only needed on the boundary of the domain and the radiation condition at infinity can be satisfied automatically \cite{PV-JASA}.  On the other hand, the Green's function of the elastic wave
equation is a second order tensor and the singularity is rather tedious to be separated in the computation of boundary integral equations, especially for the three dimensional case. Readers are referred to \cite{BXY-JCP17, BLR-JCP14, L2014, TC2007} and reference therein for some recent advances along this direction. To bypass this complexity, we use the Helmholtz decomposition to introduce two potential functions, one scalar and one vector, to split the displacement of the elastic wave fields into the compressional wave and the shear wave. The two potential functions, where the scalar one satisfies the Helmholtz equation and the vector one satisfies the Maxwell equation,
 are only coupled at the boundary of the
obstacle \cite{DLL2021}. Therefore, the boundary value problem of the Navier equation is converted equivalently into a coupled boundary value problem of Helmholtz and Maxwell equations for the potentials. Such a decomposition greatly reduces the complexity of computation for the elastic scattering problem, as many numerical techniques available for the boundary integral formulations for Helmholtz and Maxwell equations can be directly applied to the coupled problem, including the Fast Multiple Method (FMM)~\cite{GG2013, GB87}. Applications of this technique in two dimensional multi-particle scattering and inverse elastic scattering can be found in~\cite{DLL2019, DLL2020, LL2019}.

While in practice many elastic scattering problems require the
solution to Navier equations in arbitrary complex geometries,
it is also important to study the scattering problems in somewhat
simpler geometries, namely axisymmetric ones. Despite they simplify the persistent mathematical and computational difficulties of designing high-order methods in general geometries (e.g. quadrature design, mesh generation, etc), many practical problems, such as submarine detection, can be approximately modeled as the elastic scattering of axisymmetric objects. They can also provide a robust test-bed for the same integral equations which are used in general geometries and serve as building blocks for more complex objects, including composite materials made by axisymmetric particles \cite{GG2013,  Hao2015}.  In fact, the problem of computing scattered waves in axisymmetric geometries has a very rich history in both the acoustic and electromagnetic wave scattering communities~\cite{Geng1999Wide,gustafsson2010,Yu2008Closed},
and recently several groups have built specialized high-order solvers
for particular applications in plasma physics~\cite{oneil2018}, resonance
calculations~\cite{HK15, Helsing2016}, and inverse obstacle scattering~\cite{BL2020}. However, we are not aware of any efficient algorithm for the elastic scattering from axisymmetric objects.

In this work, based on the Helmholtz decomposition, the elastic scattered field decomposes into the gradient of a Helmholtz potential and the curl of a Maxwell potential.  We represent them by the Helmholtz and Maxwell single layer potentials, respectively. (The definition of Maxwell single layer potential will be given in Section \ref{sec_form}.) By using the boundary condition, we obtain a coupled integral equation system that consists of four singular boundary operators. It will be shown that two of them are only weakly singular after extracting the jumping terms, while the other two have Cauchy-type singularities. In order to design a high order discretization algorithm, we investigate the regularization properties of the two Cauchy-type singular operators, which will be reduced to weakly singular operators only,  and then apply the integral formulation to the scattering problem of axisymmetric objects. To evaluate the singular operators efficiently, we use a fast Fourier transform (FFT) based scheme to accelerate the evaluation of so-called modal Green's functions, which has been successfully applied in a variety of boundary value problems including  Laplace equations, Helmholtz equations~\cite{YHM2012}, and recently, Maxwell's equations~\cite{HK15, LAI2019}. Numerical results show that at the low frequency case, our algorithm can finish the computation in a few seconds on a laptop and achieve 10 digits accuracy compared with the reference solution. For the high frequency case, we are also able to get more than 6 digits accuracy in a reasonable amount of time. In addition, high order accuracy is also obtained for axisymmetric bodies that contain corners by using an adaptive discretization scheme. In summary,  our work in this paper has three main features: (1) a novel integral formulation for the elastic scattering problem based on Helmholtz decomposition, (2) new regularization properties for the singular integral operators resulted from the coupled boundary integral system, (3) a fast and high order discretization scheme for the elastic scattering of axisymmetric objects based on FFT.

We also make a few remarks on the integral formulation we choose. Analogous to  the single, double and combined layer potential representations for the Helmholtz equations~\cite{Kress2010}, one can also construct three variants for the integral representation of Navier equations. The integral formulation we adopt does not admit a unique solution in certain cases, as explained in Section~\ref{sec_form}. However, it does not undermine the value of the proposed algorithm, as the extension to the other two formulations would be straightforward.

An outline of the paper is the following: Section~\ref{probform}
introduces the problem of Navier equations for the time-harmonic elastic scattering in isotropic media. Section ~\ref{sec_form}
formulates the boundary integral equation system and shows the regularization properties of the singular operators.  Section~\ref{sec_fourier} gives the transformation of the integral equation along an axisymmetric surface into a sequence of decoupled integral equations along a curve in two dimensions based on the
Fourier transform in the azimuthal direction.  Ideas for the fast
kernel evaluation by FFT are given in Section~\ref{kerneleval}. It also discusses the high-order discretization of the sequence of line integrals using adaptive panels and generalized Gaussian quadratures for the associated weakly singular integral operators. Numerical examples are given in Section~\ref{sec_numeri}, including scattering results in both smooth and non-smooth geometries. Section~\ref{sec_con} concludes the discussion with some future work.

\section{Problem Formulation}
\label{probform}
Consider a three-dimensional rigid obstacle $\Omega$, given as a
simply connected, bounded domain in $\mathbb R^3$ with boundary $\Gamma$. To ease the discussion, we assume the boundary $\Gamma$ is smooth, although it will be numerically extended to non-smooth geometries.
Denote by $\boldsymbol{\tau}_1, \boldsymbol{\tau}_2$ and $\bn$ the unit tangential and exterior
normal vectors on $\Gamma$, respectively. The exterior domain $\mathbb{R}^3\setminus \overline{\Omega}$ is
assumed to be filled with a homogeneous and isotropic elastic medium with a unit
mass density.

Let the obstacle be illuminated by a time-harmonic compressional plane wave
$\boldsymbol{u}^{\rm inc}(\bx)= (\boldsymbol{d}\cdot \boldsymbol{p})\boldsymbol{d}\mathrm{e}^{\mathrm{i} \kappa_{\mathfrak p}\boldsymbol{d}\cdot \bx}$ or shear plane wave $\boldsymbol{u}^{\rm inc}(\bx)=(\boldsymbol{d}\times \boldsymbol{p})\times \boldsymbol{d} 
\mathrm{e}^{\mathrm{i}\kappa_{\mathfrak s} \boldsymbol{d}\cdot \bx}$,
where $\boldsymbol{d}$ and $\boldsymbol{p}$ are
the propagation and  polarization vectors, respectively, both of which are vectors on a unit sphere,  and
\[
\kappa_{\mathfrak p}=\frac{\omega}{\sqrt{\lambda+2\mu}},\quad
\kappa_{\mathfrak s}=\frac{\omega}{\sqrt{\mu}},
\]
are the compressional wavenumber and shear wavenumber, respectively. Here $\omega>0$ is the angular frequency and $\lambda, \mu$ are the
Lam\'{e} constants satisfying $\mu>0, \lambda+\mu>0$. The displacement of the total elastic field $\boldsymbol{u}$ satisfies the Navier
equation
\[
\mu\Delta\boldsymbol{u}+(\lambda+\mu)\nabla\nabla\cdot\boldsymbol{u}
+\omega^2\boldsymbol{u}=0\quad {\rm in}\, \mathbb{R}^3\setminus \overline{\Omega}. 	
\]
 Since the obstacle is
assumed to be rigid,  it implies $\boldsymbol u$ satisfies the homogeneous
boundary condition 
\[
\boldsymbol{u}=0\quad {\rm on}~\Gamma.
\]
The total field $\boldsymbol u$ consists of the incident field $\boldsymbol
u^{\rm inc}$ and the scattered field $\usc$, i.e., 
\[
\boldsymbol u=\boldsymbol u^{\rm inc}+\usc. 
\]
It is easy to verify that the incoming field $\boldsymbol u^{\rm inc}$ 
satisfies the Navier equation. Therefore the scattered field $\usc$ satisfies the boundary value problem
\begin{equation}\label{scatteredfield}
\begin{cases}
\mu\Delta\usc+(\lambda+\mu)\nabla\nabla\cdot\usc
+\omega^2\usc=0\quad &{\rm in}~
\mathbb{R}^3\setminus\overline{\Omega},\\
\usc=-\boldsymbol{u}^{\rm inc}\quad &{\rm on}~\Gamma.
\end{cases}
\end{equation}
In order to guarantee the well-posedness of the corresponding
boundary value problem, the  field $\usc$ is required to satisfy
the Kupradze radiation condition at infinity, i.e.
\begin{eqnarray}\label{krad}
\lim_{r\to\infty}r(\partial_r\boldsymbol u_{\mathfrak p}-\mathrm{i}\kappa_{\mathfrak p}\boldsymbol u_{\mathfrak p})=0,\quad
\lim_{r\to\infty}r(\partial_r\boldsymbol u_{\mathfrak s}-\mathrm{i}\kappa_{\mathfrak s}\boldsymbol u_{\mathfrak s})=0,\quad r=|\bx|,
\end{eqnarray}
uniformly in all directions, where
\[
\boldsymbol u_{\mathfrak p}=-\frac{1}{\kappa_{\mathfrak p}^2}\nabla\nabla\cdot\usc,\quad \boldsymbol u_{\mathfrak s}=\frac{1}{\kappa_{\mathfrak s}^2}\nabla\times \nabla\times \usc
\]
are known as the compressional and shear wave components of $\usc$,
respectively. 

While there exists the free space Green's function for the Navier equation, we choose to apply the Helmholtz decomposition to formulate the integral equation for the boudnary value problem \eqref{scatteredfield}. As we mentioned in the introduction, such a decomposition will simplify the computation of elastic scattering. For any solution $\usc$ of the elastic wave equation
\eqref{scatteredfield}, the Helmholtz decomposition reads
\begin{equation}\label{HelmDeco}
\usc=\nabla\phi+\nabla\times \boldsymbol{\psi}, \mbox{ with }\nabla\cdot \boldsymbol{\psi}=0,
\end{equation}
where $\phi$ and $\bpsi$ are scalar and vector potentials, respectively. Combining
\eqref{scatteredfield} and \eqref{HelmDeco} yields the Helmholtz and Maxwell equations
\begin{align}\label{helmmax}
\begin{cases}
\Delta\phi+\kappa_{\mathfrak p}^{2}\phi=0, \\ 
\nabla\times \nabla\times \bpsi-\kappa_{\mathfrak s}^{2}\bpsi=0.
\end{cases}
\end{align}
Due to the Kupradze radiation condition \eqref{krad}, $\phi$ and $\bpsi$ are required to satisfy the Sommerfeld radiation
conditions
\begin{equation*}
\lim_{r\to\infty}r(\partial_r
\phi-\mathrm{i}\kappa_{\mathfrak p}\phi)=0, \quad
\lim_{r\to\infty}r(\partial_r
\bpsi-\mathrm{i}\kappa_{\mathfrak s}\bpsi)=0, \quad r=|\bx|,
\end{equation*}
while the second equality is understood in the sense of component-wise, and is equivalent to the Silver-M\"uller radiation condition in electromagnetic scattering~\cite{DR-shu2}.
The coupling between $\phi$ and $\bpsi$ follows from  the boundary condition on
$\Gamma$ that
\[
\usc=\nabla\phi+\nabla\times\bpsi=-\boldsymbol{u}^{\rm
	inc}.
\]
Taking the dot and cross products of the above equation with the normal vector $\bn$,
respectively, we obtain
\[
 \partial_\bn\phi+\bn\cdot\nabla\times\bpsi=f,\quad
\bn\times\phi+\bn\times \nabla \times\bpsi=\boldsymbol{g},
\]
where
\[
f=-\bn\cdot\boldsymbol{u}^{\rm inc},\quad
\boldsymbol{g}=-\bn\times\boldsymbol{u}^{\rm inc}.
\]
In summary, the potential functions $\phi$ and $\bpsi$ satisfy the coupled
boundary value problem
\begin{align}\label{HelmholtzDec}
\begin{cases}
\Delta\phi+\kappa_{\mathfrak p}^{2}\phi=0,\quad
\nabla\times \nabla\times \bpsi-\kappa_{\mathfrak s}^{2}\bpsi=0,\quad &{\rm in}\, \mathbb{R}^3\setminus \overline{\Omega}, \\
\partial_\bn\phi+\bn\cdot\nabla\times\bpsi=f,\quad
\bn\times\nabla\phi+\bn\times \nabla \times\bpsi=\boldsymbol{g},\quad &{\rm on} ~ \Gamma,\\
\displaystyle{\lim_{r\to\infty}r(\partial_{r}\phi-\mathrm{i}\kappa_{\mathfrak p}\phi)=0}, \quad
\displaystyle{\lim_{r\to\infty}r(\partial_{r}\bpsi-\mathrm{i}\kappa_{\mathfrak s}\bpsi)=0}, \quad &r=|\bx|.
\end{cases}
\end{align}
When $\kappa_{\mathfrak p} >0$ and $\kappa_{\mathfrak s}>0$, the uniqueness result of the coupled system \eqref{HelmholtzDec} can be obtained by standard argument~\cite{LL2019}. We assume this always holds in the subsequent discussion, and mainly focus on solving system \eqref{HelmholtzDec} by the integral equation method.

\section{Integral Equation Formulations}
\label{sec_form}

 Using potential theory, we can represent the potential functions $\phi$ and $\bpsi$ in  \eqref{helmmax} by
\begin{eqnarray}\label{singlerep}
\phi =S^{\kp}\sigma, \quad
\bpsi =\frac{1}{\ks^{2}}\nabla\times \nabla\times S^{\ks}\mathbf{J}, \mbox{ for } \bx\in \mathbb{R}^3\setminus \overline{\Omega},
\end{eqnarray}
where
\begin{eqnarray}
S^{\kp}\sigma(\bx) &=& \int_{\Gamma} G^{\kp}(\bx,\by)\sigma(\by)ds_{\by},\label{singler}  \\
S^{\ks} \bJ(\bx) &=& \int_{\Gamma} G^{\ks}(\bx,\by) \, \bJ(\by) ds_{\by},\label{singlej} 
\end{eqnarray}
and the function
$G^{\kappa}(\bx,\by)$ is the free space Green's function for the three
dimensional Helmholtz equation:
\begin{equation}\label{greenf}
G^{\kappa}(\bx,\by) = \frac{e^{i{\kappa}|\bx-\by|}}{4\pi|\bx-\by|}, \quad \kappa=\kp \mbox{ or }\ks, 
\end{equation}  $\sigma$ is the unknown charge density and $\bJ$ is the unknown \emph{surface current} (as a terminology used in electromagnetic scattering) on $\Gamma$. While the representation for $\phi$ is the well-known Helmholtz single layer potential, for convenience we call the representation for $\bpsi$ in \eqref{singlerep} as the \emph{Maxwell single layer potential}. Based on the identity
\begin{eqnarray}
\frac{1}{\ks^{2}}\nabla\times \nabla\times S^{\ks}\mathbf{J} =\frac{1}{\ks^{2}} \nabla\nabla\cdot S^{\ks}\mathbf{J} - \frac{1}{\ks^{2}}\Delta S^{\ks}\mathbf{J} =\frac{1}{\ks^{2}} \nabla\nabla\cdot S^{\ks}\mathbf{J} + S^{\ks}\mathbf{J}, 
\end{eqnarray}
the solution $\mathbf{u}^{sc}$ can be represented by 
\begin{eqnarray}\label{scatrep}
\mathbf{u}^{sc}=\nabla\phi + \nabla\times \bpsi = \nabla S^{\kp}\sigma + \nabla\times S^{\ks}\mathbf{J}.
\end{eqnarray}

To derive the integral equation, we need the limiting behavior of the integral operators, since when $\bx$ approaches the boundary $\Gamma$, the integrals $\phi$ and $\bpsi$ become singular.  Denote by $C^{0,\alpha}(\Gamma)$, $0<\alpha< 1$, the classic \textit{H\"older space}, which is a Banach space with the norm
\[
||\sigma||_{0,\alpha}:=\sup_{\bx\in \Gamma}|\sigma(\bx)|+\sup_{\bx,\by\in \Gamma, \bx\ne \by}\frac{|\sigma(\bx)-\sigma(\by)|}{|\bx-\by|^\alpha}.
\]
Let $C^{1,\alpha}(\Gamma)$ be the Banach space with the norm
\[
||\sigma||_{1,\alpha}:=\sup_{\bx\in \Gamma}|\sigma(\bx)|+\sup_{\bx\in \Gamma}||\nabla\sigma||_{0,\alpha}.
\]
Following the notation in \cite{DR-shu2}, we denote by $T^{0,\alpha}(\Gamma)$, $0<\alpha<1$, the space of all uniformly H\"older continuous tangential fields on $\Gamma$, and define $T_{d}^{0,\alpha}(\Gamma)$ to be the Banach space 
\[
T_{d}^{0,\alpha}(\Gamma)=\{\bJ\in T^{0,\alpha}(\Gamma):  {\rm Div }\bJ \in C^{0,\alpha}(\Gamma) \},
\]
where $\rm Div$ denotes the surface divergence on $\Gamma$. 
\begin{lemma}
 For the single layer potential $\phi$ in equation \eqref{singler} with  density $\sigma\in C^{0,\alpha}(\Gamma)$, the first derivative can be uniformly H\"older continuously  extended from $\mathbb{R}^3\setminus \overline{\Omega}$ to $\mathbb{R}^3\setminus {\Omega}$ with boundary value
	\[
	\nabla \phi_{+}(\bx) =  \int_\Gamma \nabla_{\bx} G^\kappa(\bx,\by)\sigma(\by)ds_\by-\frac{1}{2}\sigma(\bx)\bn(\bx), \quad \bx\in\Gamma  
	\] 
	where $$\nabla \phi_{+}:=\lim_{h\rightarrow +0 }\nabla \phi(\bx+ h\bn(x)).$$
\end{lemma}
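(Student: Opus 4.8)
The plan is to reduce the statement to the classical jump relation for the harmonic (Laplace) single layer potential by peeling off the smooth part of the Helmholtz kernel. Writing $r=|\bx-\by|$, I would decompose
\[
G^\kappa(\bx,\by)=\frac{1}{4\pi r}+\frac{e^{\mi\kappa r}-1}{4\pi r}=:G^0(\bx,\by)+R^\kappa(\bx,\by),
\]
where $G^0$ is the fundamental solution of the Laplace equation and $R^\kappa$ is a remainder. Correspondingly $\phi=S^0\sigma+S^{R^\kappa}\sigma$, and the claim follows once I show that the jump in $\nabla\phi$ across $\Gamma$ is produced entirely by the $S^0$ term, while $\nabla(S^{R^\kappa}\sigma)$ extends continuously (indeed Hölder continuously) across $\Gamma$ with no jump.

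For the remainder, the key observation is that $R^\kappa$ is bounded near the diagonal, since $e^{\mi\kappa r}-1=\mi\kappa r+O(r^2)$ gives $R^\kappa\to \mi\kappa/4\pi$ as $r\to0$. A short computation shows that the radial derivative of $R^\kappa$ also stays bounded as $r\to0$ (its limit is $-\kappa^2/8\pi$), so that $\nabla_\bx R^\kappa$ is a bounded kernel depending only on $r$. I can therefore differentiate under the integral sign, and the resulting integral $\int_\Gamma \nabla_\bx R^\kappa(\bx,\by)\sigma(\by)\,ds_\by$ defines a function continuous throughout $\mathbb{R}^3$. Splitting the surface integral into a small cap around $\bx$, whose contribution is uniformly small because the kernel is bounded and $\sigma\in C^{0,\alpha}(\Gamma)$, and a smooth far part yields the uniform Hölder continuity of this term across $\Gamma$ and, in particular, the absence of any jump.

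For the harmonic part I would invoke the classical jump relations for the Laplace single layer potential with Hölder continuous density (see the potential-theoretic results in \cite{DR-shu2}): the tangential derivatives of $S^0\sigma$ extend continuously from both sides, whereas the normal derivative jumps according to $\partial_\bn (S^0\sigma)_\pm(\bx)=\int_\Gamma \partial_{\bn(\bx)}G^0(\bx,\by)\sigma(\by)\,ds_\by\mp\tfrac12\sigma(\bx)$. Equivalently the full gradient satisfies $\nabla(S^0\sigma)_\pm(\bx)=\int_\Gamma \nabla_\bx G^0(\bx,\by)\sigma(\by)\,ds_\by\mp\tfrac12\sigma(\bx)\bn(\bx)$, where the exterior trace $(+)$ corresponds to the limit $h\to+0$ along $+\bn$. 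Adding the two contributions and recombining $\nabla_\bx G^0+\nabla_\bx R^\kappa=\nabla_\bx G^\kappa$ then recovers the stated formula with jump term $-\tfrac12\sigma(\bx)\bn(\bx)$.

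The hard part is the harmonic term, namely establishing the $\mp\tfrac12$ jump together with the uniform Hölder continuity of the one-sided gradients. This is the genuinely singular step: $\nabla_\bx G^0\sim(\bx-\by)/(4\pi r^3)$ is of order $r^{-2}$, so on the two-dimensional surface $\Gamma$ its tangential components must be interpreted as a principal value (the odd symmetry of the leading term supplying the cancellation), while the normal component is only weakly singular because $(\bx-\by)\cdot\bn(\bx)=O(r^2)$ for smooth $\Gamma$. Carrying this out rigorously — deriving the coefficient $\tfrac12$ from the local solid-angle contribution and the Hölder estimates for the principal-value integral — is precisely the content of the classical theory I am citing; the decomposition above is what lets me import it directly and treat the Helmholtz-specific terms as harmless lower-order perturbations.
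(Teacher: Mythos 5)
Your argument is correct and is essentially the same route the paper takes: the paper's entire proof is the citation to Colton--Kress (Theorem 3.3), and your decomposition $G^\kappa = G^0 + R^\kappa$ with the gradient of the remainder shown to be bounded near the diagonal, the $\mp\tfrac{1}{2}\sigma\bn$ jump delegated to the classical Laplace single-layer theory, is exactly the standard derivation underlying that citation. The specific computations you supply (the limits $R^\kappa \to \mi\kappa/4\pi$ and $\partial_r R^\kappa \to -\kappa^2/8\pi$ as $r\to 0$, and the sign convention matching the exterior trace $h\to +0$ along $+\bn$) are all accurate.
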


Here we denote by $\nabla_\bx$ the gradient with respect to $\bx$. Proof can be found in \cite{DR-shu2}(Theorem 3.3)

\begin{lemma}
	 For the vector potential $\bpsi$ in equation \eqref{singlej} with  density $\bJ\in T^{0,\alpha}(\Gamma)$, the first derivative can be uniformly H\"older continuously  extended from $\mathbb{R}^3\setminus \overline{\Omega}$ to $\mathbb{R}^3\setminus {\Omega}$ with boundary value
	\[
	\nabla \times \bpsi_{+}(\bx) =  \int_\Gamma \nabla_{\bx} G^\kappa(\bx,\by)\times \bJ(\by)ds_\by-\frac{1}{2}\bn(\bx)\times\bJ(\bx), \quad \bx\in\Gamma  
	\] 
	where $$\nabla\times \bpsi_{+}:=\lim_{h\rightarrow +0 }\nabla \times \bpsi(\bx+ h\bn(x)).$$
\end{lemma}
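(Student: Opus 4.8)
The plan is to reduce the statement to the scalar result of the previous lemma by working in a fixed Cartesian frame, since the curl of a vector single layer potential is nothing but a fixed linear combination of gradients of scalar single layer potentials. Here the ``vector potential'' means $\bpsi=S^{\kappa}\bJ$ with $G^{\kappa}$ as in \eqref{greenf}. First I would write $\bJ=(J_1,J_2,J_3)$. Because $\bJ\in T^{0,\alpha}(\Gamma)$ is uniformly H\"older continuous as a vector field and each component is obtained by pairing with a constant unit vector, every component $J_j$ lies in $C^{0,\alpha}(\Gamma)$; hence each scalar potential $S^{\kappa}J_j$ falls within the scope of the preceding lemma. Correspondingly $S^{\kappa}\bJ=(S^{\kappa}J_1,S^{\kappa}J_2,S^{\kappa}J_3)$, and for $\bx\notin\Gamma$ one differentiates under the integral sign, using $\nabla\times(G^{\kappa}\bJ)=\nabla_\bx G^{\kappa}\times\bJ$ (as $\bJ(\by)$ is independent of $\bx$), to get
\[
\nabla\times S^{\kappa}\bJ(\bx)=\int_\Gamma \nabla_\bx G^{\kappa}(\bx,\by)\times \bJ(\by)\,ds_\by .
\]

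Next I would write the curl through the Levi-Civita symbol $\epsilon_{kij}$ as $(\nabla\times S^{\kappa}\bJ)_k=\sum_{i,j}\epsilon_{kij}\,\partial_{x_i}(S^{\kappa}J_j)$, so that each term $\partial_{x_i}(S^{\kappa}J_j)$ is a component of the gradient $\nabla(S^{\kappa}J_j)$. Applying the previous lemma to each $S^{\kappa}J_j$ yields the uniformly H\"older continuous exterior boundary value
\[
[\nabla (S^{\kappa}J_j)_+]_i=\int_\Gamma \partial_{x_i}G^{\kappa}(\bx,\by)\,J_j(\by)\,ds_\by-\tfrac12 J_j(\bx)\,n_i(\bx),\quad \bx\in\Gamma .
\]
Substituting this into the component expression and contracting the indices, the integral terms reassemble into $\big[\int_\Gamma \nabla_\bx G^{\kappa}\times\bJ\,ds_\by\big]_k$, while the jump terms give $-\tfrac12\sum_{i,j}\epsilon_{kij}n_i J_j=-\tfrac12(\bn\times\bJ)_k$, using $(\vct a\times\vct b)_k=\sum_{i,j}\epsilon_{kij}a_ib_j$. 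Collecting the three components then reproduces exactly the claimed boundary value $\nabla\times\bpsi_+(\bx)=\int_\Gamma \nabla_\bx G^{\kappa}\times\bJ\,ds_\by-\tfrac12\bn\times\bJ$.

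The uniform H\"older continuity of the extension is then immediate: the previous lemma supplies a uniformly H\"older continuous extension of each $\nabla(S^{\kappa}J_j)$ from $\mathbb{R}^3\setminus\overline{\Omega}$ to $\mathbb{R}^3\setminus\Omega$, and a finite linear combination of uniformly H\"older continuous fields is again uniformly H\"older continuous, so $\nabla\times S^{\kappa}\bJ$ inherits the extension. I do not expect a genuine analytic obstacle here, since all the singular-integral and H\"older-regularity content is already packaged in the scalar lemma; the only points requiring care are the justification that differentiation under the integral and the exterior limit commute (which is precisely what the scalar lemma guarantees termwise) and the index bookkeeping that must correctly convert the three normal jumps $-\tfrac12 J_j\bn$ of the scalar pieces into the single tangential jump $-\tfrac12\bn\times\bJ$. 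It is worth noting that the tangency of $\bJ$ is not actually used in this argument, only that its components are H\"older continuous, so the assumption $\bJ\in T^{0,\alpha}(\Gamma)$ is imposed merely because $\bJ$ represents a surface current in the application.
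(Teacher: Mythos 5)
Your argument is correct and complete; note, though, that the paper does not actually prove this lemma --- it only cites Theorem 6.12 of \cite{DR-shu2}, exactly as it cites Theorem 3.3 for the preceding scalar lemma. What you supply is the derivation that the citation conceals: writing $(\nabla\times S^{\kappa}\bJ)_k=\sum_{i,j}\epsilon_{kij}\,\partial_{x_i}(S^{\kappa}J_j)$ expresses the curl as a fixed linear combination of first derivatives of scalar single layer potentials with H\"older densities $J_j$, so the scalar jump relation applies termwise, the jump contributions $-\tfrac12 J_j n_i$ contract against $\epsilon_{kij}$ to give $-\tfrac12(\bn\times\bJ)_k$, and uniform H\"older continuity of the extension survives the finite linear combination. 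This is in substance how the cited reference itself obtains the result, so your route buys self-containedness at essentially no extra cost, with the boundary integral inheriting its meaning (as an improper integral) from the scalar lemma. Your side remark is also accurate: the tangency of $\bJ$ is never used, only the H\"older continuity of its Cartesian components, so $\bJ\in T^{0,\alpha}(\Gamma)$ is a modelling hypothesis here rather than an analytic necessity. One small point of care: the lemma's phrase ``the vector potential $\bpsi$ in equation \eqref{singlej}'' clashes with \eqref{singlerep}, where $\bpsi=\frac{1}{\ks^{2}}\nabla\times\nabla\times S^{\ks}\bJ$; your reading $\bpsi=S^{\kappa}\bJ$ is the one consistent with the displayed jump formula and with the way the lemma is used in passing from \eqref{scatrep} to \eqref{elastcisys}.
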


Proof can be found in \cite{DR-shu2}(Theorem 6.12)

By making use of the two lemmas above, we are able to formulate the integral equations by letting $\bx$ approach the boundary  $\Gamma$ from the exterior, which yields the system
\begin{eqnarray}\label{elastcisys}
\begin{cases}
\left(-\frac{1}{2}+\mathcal{K}\right )\sigma(\bx) + \mathcal{N}\bJ(\bx) = f(\bx),  \\
\mathcal{H}\sigma(\bx) +\left(\frac{1}{2}+\mathcal{M}\right)\bJ(\bx) =  \boldsymbol{g}(\bx), 
\end{cases} \quad \bx\in\Gamma, 
\end{eqnarray}
where 
\begin{align}
&\mathcal{K}\sigma(\bx) = \int_{\Gamma}\bn(\bx)\cdot \nabla_{\bx} G^{\kp}(\bx,\by)
\sigma(\by)ds_{\by}, &&\mathcal{N}\bJ(\bx) = \int_{\Gamma}\bn(\bx)\cdot \nabla_{\bx} \times \big(G^{\ks}(\bx,\by)
\bJ(\by)\big)ds_{\by}, \notag \\
&\mathcal{H}\sigma(\bx) = \int_{\Gamma}\bn(\bx)\times \nabla_{\bx} G^{\kp}(\bx,\by)
\sigma(\by)ds_{\by}, &&\mathcal{M}\bJ(\bx) = \int_{\Gamma}\bn(\bx)\times \nabla_{\bx}\times \big(G^{\ks}(\bx,\by)
\bJ(\by)\big)ds_{\by}. \notag 
\end{align}

While all the four boundary integral operators in \eqref{elastcisys} are defined in the sense of Cauchy principle value, the operators $\mathcal{K}$ and $\mathcal{M}$ are only weakly singular for smooth surface $\Gamma$. Therefore they can be numerically discretized by the generalized Gaussian quadrature. Details will be given in  Section \ref{kerneleval}. In order to discretize the operators $\mathcal{H}$ and $\mathcal{N}$ to high order, we need the following regularization properties.
\begin{theorem}
	\label{them1}
	Let $\bta$ and $\btb$ be the unit tangential vectors of a closed $C^2$ surface $\Gamma$ and $\bn = \bta\times \btb$. For $\sigma \in C^{1,\alpha}(\Gamma)$, $0<\alpha<1$ and $i = 1,2$,  it holds
	\begin{eqnarray}\label{regu1}
		\int_{\Gamma}\frac{\partial G^\kappa(\bx,\by)}{\partial \bti(\bx)}\sigma(\by)ds_\by &=&\int_\Gamma G^\kappa(\bx,\by)  {\rm Div}_\by(\bn(\by)\times \bti (\bx)\times \bn(\by)) \sigma(\by) ds_\by \notag \\
		&& + \int_\Gamma G^\kappa(\bx,\by)  (\bn(\by)\times \tau_i(\bx)\times \bn(\by)) \cdot {\rm Grad}_\by\sigma(\by) ds_\by \notag \\
		&&-\bti(\bx) \cdot\int_\Gamma \frac{\partial G^\kappa(\bx,\by)}{\partial \bn(\by)}  \bn(\by) \sigma(\by)ds_\by
	\end{eqnarray}
for $\bx\in\Gamma$, where ${\rm Div}_{\by}$ and ${\rm Grad}_{\by}$ are the surface divergence and gradient on $\Gamma$ with respect to $\by$, respectively. 
\end{theorem}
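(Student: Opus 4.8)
The plan is to exploit the translation invariance of the fundamental solution together with a surface integration by parts, trading the strongly singular $\bx$-derivative on the left for the weakly singular kernel $G^\kappa$ on the right. The starting point is the elementary identity $\nabla_\bx G^\kappa(\bx,\by) = -\nabla_\by G^\kappa(\bx,\by)$, valid because $G^\kappa$ depends only on $|\bx-\by|$. Consequently the left-hand integrand of \eqref{regu1} becomes $\partial G^\kappa/\partial\bti(\bx) = \bti(\bx)\cdot\nabla_\bx G^\kappa = -\bti(\bx)\cdot\nabla_\by G^\kappa$, where the crucial point is that $\bti(\bx)$ is now a \emph{constant} direction as far as the $\by$-integration is concerned.

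The second step is a pointwise orthogonal decomposition of this constant direction at each integration point $\by$. By the double cross product identity, $\bn(\by)\times\bti(\bx)\times\bn(\by) = \bti(\bx) - (\bti(\bx)\cdot\bn(\by))\,\bn(\by)$ is exactly the projection of $\bti(\bx)$ onto the tangent plane at $\by$, so that
\[
\bti(\bx) = \bn(\by)\times\bti(\bx)\times\bn(\by) + \big(\bti(\bx)\cdot\bn(\by)\big)\,\bn(\by).
\]
Substituting this into $-\bti(\bx)\cdot\nabla_\by G^\kappa$ splits the left-hand side into a tangential piece and a normal piece. The normal piece reproduces the last term of \eqref{regu1} immediately, since $\bn(\by)\cdot\nabla_\by G^\kappa = \partial G^\kappa/\partial\bn(\by)$ and the scalar $(\bti(\bx)\cdot\bn(\by))$ can be pulled out to leave $-\bti(\bx)\cdot\int_\Gamma (\partial G^\kappa/\partial\bn(\by))\,\bn(\by)\,\sigma\,ds_\by$.

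For the tangential piece I would set $\boldsymbol{w}(\by):=\bn(\by)\times\bti(\bx)\times\bn(\by)$, a tangential field on $\Gamma$, and use that for such a field the full and surface gradients agree when contracted against it, $\boldsymbol{w}\cdot\nabla_\by G^\kappa = \boldsymbol{w}\cdot{\rm Grad}_\by G^\kappa$. I then apply the surface product rule ${\rm Div}_\by(G^\kappa\sigma\,\boldsymbol{w}) = G^\kappa\sigma\,{\rm Div}_\by\boldsymbol{w} + G^\kappa\,\boldsymbol{w}\cdot{\rm Grad}_\by\sigma + \sigma\,\boldsymbol{w}\cdot{\rm Grad}_\by G^\kappa$ and integrate over the closed surface $\Gamma$; because $\Gamma$ has no boundary and $G^\kappa\sigma\,\boldsymbol{w}$ is tangential, the integral of the left-hand divergence vanishes, leaving
\[
-\int_\Gamma \sigma\,\boldsymbol{w}\cdot{\rm Grad}_\by G^\kappa\,ds_\by = \int_\Gamma G^\kappa\sigma\,{\rm Div}_\by\boldsymbol{w}\,ds_\by + \int_\Gamma G^\kappa\,\boldsymbol{w}\cdot{\rm Grad}_\by\sigma\,ds_\by,
\]
which are precisely the first two terms of \eqref{regu1}. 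Collecting the tangential and normal contributions then gives the claimed identity.

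The main obstacle is that this integration by parts is only formal as stated, since $\nabla_\by G^\kappa$ carries a Cauchy-type $O(|\bx-\by|^{-2})$ singularity and the left-hand side exists only as a principal value. I would therefore run the argument on the excised surface $\Gamma\setminus B_\varepsilon(\bx)$, where the surface divergence theorem produces an extra line integral of $G^\kappa\sigma\,\boldsymbol{w}$ against the conormal over the geodesic circle $\partial B_\varepsilon(\bx)$, and let $\varepsilon\to 0$. The delicate point is showing this circle term vanishes: $G^\kappa\sim (4\pi\varepsilon)^{-1}$ against a circumference $\sim 2\pi\varepsilon$ leaves an $O(1)$ quantity whose leading part is the angular average of $\bti(\bx)\cdot\boldsymbol{m}$ over the circle, which integrates to zero by symmetry, while the remainder decays. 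The hypotheses $\sigma\in C^{1,\alpha}(\Gamma)$ and $\Gamma\in C^2$ are exactly what make this limit legitimate: they guarantee $G^\kappa\,{\rm Grad}_\by\sigma$ and $G^\kappa\,{\rm Div}_\by\boldsymbol{w}$ are integrable, and the geometric cancellation $\bti(\bx)\cdot\bn(\by)=O(|\bx-\by|)$ (since $\bti(\bx)\cdot\bn(\bx)=0$) reduces the normal term from strongly to weakly singular, so that every integral on the right-hand side of \eqref{regu1} is at worst weakly singular.
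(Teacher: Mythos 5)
Your proof is correct and follows essentially the same route as the paper: replace $\nabla_\bx G^\kappa$ by $-\nabla_\by G^\kappa$, split into the tangential projection $\bn(\by)\times\bti(\bx)\times\bn(\by)$ and the normal component at $\by$, and integrate by parts on the closed surface using the product rule for the surface divergence. The paper's version is terser --- it simply invokes the two identities \eqref{iden1}--\eqref{iden2} without the excision/principal-value justification you supply --- but the underlying argument is identical.
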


\begin{proof}
For $i=1,2$, based on the identity $\nabla_\bx G^\kappa(\bx,\by)=-\nabla_\by G^\kappa(\bx,\by)$, the operator 
\begin{eqnarray*}
	\int_{\Gamma}\frac{\partial G^\kappa(\bx,\by)}{\partial \bti(\bx)}\sigma(\by)ds_\by &=& -\int_\Gamma \bti \cdot \nabla_\by G^\kappa(\bx,\by) \sigma(\by)ds_\by \\
	&=& -\int_\Gamma\bti(\bx) \cdot\left({\rm Grad_\by}G^\kappa(\bx,\by)+\bn(\by)\frac{\partial G^\kappa(\bx,\by)}{\partial \bn(\by)}\right)\sigma(\by)ds_\by\\
	&=& -\int_\Gamma (\bn(\by)\times \bti(\bx)\times \bn(\by))\cdot {\rm Grad}_\by G^\kappa(\bx,\by) \sigma(\by)ds_\by \\ &&-\bti(\bx) \cdot\int_\Gamma \frac{\partial G^\kappa(\bx,\by)}{\partial \bn(\by)}  \bn(\by) \sigma(\by)ds_\by,
\end{eqnarray*}
The conclusion follows from the two identities: 
\begin{eqnarray}
\int_\Gamma \bJ\cdot {\rm Grad} \sigma ds_\by&=&-\int_\Gamma \sigma {\rm Div}\bJ ds_\by, \label{iden1} \\
{\rm Div}(\sigma \bJ) &=& {\rm Grad}\sigma \cdot \bJ + \sigma  {\rm Div}\bJ \label{iden2}  
\end{eqnarray}
for any function $\sigma \in C^{1,\alpha}(\Gamma)$ and tangential vector $\bJ:=J^1\bta+J^2\btb\in T_d^{0,\alpha}(\Gamma)$.	
\end{proof}

\begin{theorem}
	\label{them2}
	Let $\bta$ and $\btb$ be the unit tangential vectors of a closed $C^2$ surface $\Gamma$ and $\bn = \bta\times \btb$. For $\bn\times\bJ =-J^2\bta+J^1\btb\in T_{d}^{0,\alpha}(\Gamma)$, $0<\alpha<1$, and $i = 1,2$,  it holds
	\begin{eqnarray}\label{regu2}
		\bn(\bx)\cdot \nabla_\bx\times \int_{\Gamma}G^\kappa(\bx,\by)\bJ(\by)ds_\by	
		&=& -\int_\Gamma G^\kappa(\bx,\by){\rm Grad}_\by(\bn(\bx)\cdot \bn(\by))\cdot (\bn(\by)\times \bJ(\by))ds_\by\notag \\ &&-\int_\Gamma G^\kappa(\bx,\by)\bn(\bx)\cdot \bn(\by){\rm Div}_\by(\bn(\by)\times \bJ(\by))ds_\by \notag \\ 
		&&-\bn(\bx)\cdot \int_\Gamma \frac{\partial G^\kappa(\bx,\by)}{\partial \bn(\by)} \bn(\by)\times \bJ(\by)ds_\by, 
	\end{eqnarray}
for $\bx\in\Gamma$.
\end{theorem}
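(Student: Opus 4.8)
The plan is to mirror the argument used for Theorem~\ref{them1}: pull the differential operator inside the integral, split $\nabla_\by G^\kappa$ into tangential and normal components relative to $\by$, and then integrate by parts using the surface identities \eqref{iden1} and \eqref{iden2}.

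First I would move the curl under the integral sign. Since $\bJ(\by)$ does not depend on $\bx$, one has $\nabla_\bx\times\big(G^\kappa(\bx,\by)\bJ(\by)\big)=\nabla_\bx G^\kappa(\bx,\by)\times\bJ(\by)$, so that
\[
\bn(\bx)\cdot \nabla_\bx\times \int_{\Gamma}G^\kappa(\bx,\by)\bJ(\by)ds_\by=-\int_\Gamma \bn(\bx)\cdot\big(\nabla_\by G^\kappa(\bx,\by)\times\bJ(\by)\big)ds_\by,
\]
where I have also used $\nabla_\bx G^\kappa=-\nabla_\by G^\kappa$. Decomposing $\nabla_\by G^\kappa={\rm Grad}_\by G^\kappa+\bn(\by)\,\partial G^\kappa/\partial\bn(\by)$ splits the integrand into a normal and a tangential contribution. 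The normal contribution directly yields the last term of \eqref{regu2}, namely $-\bn(\bx)\cdot\int_\Gamma (\partial G^\kappa/\partial\bn(\by))\,\bn(\by)\times\bJ(\by)\,ds_\by$.

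The main obstacle is the tangential contribution, which requires an algebraic identity to rewrite $\bn(\bx)\cdot\big({\rm Grad}_\by G^\kappa\times\bJ\big)$ in a form that exposes $\bn(\by)\times\bJ$. Since $\bJ$ is tangential at $\by$, I would use $\bJ=-\bn(\by)\times(\bn(\by)\times\bJ)$, expand the resulting double cross product by the BAC--CAB rule, and exploit ${\rm Grad}_\by G^\kappa\cdot\bn(\by)=0$ to obtain
\[
\bn(\bx)\cdot\big({\rm Grad}_\by G^\kappa\times\bJ\big)=-\big(\bn(\bx)\cdot\bn(\by)\big)\,{\rm Grad}_\by G^\kappa\cdot\big(\bn(\by)\times\bJ\big).
\]
The delicate point is the bookkeeping: $\bn(\bx)$ is frozen at the target point while all surface differentiation acts on $\by$, so the scalar $\bn(\bx)\cdot\bn(\by)$ must remain inside the $\by$-integral and cannot be treated as constant under ${\rm Div}_\by$ or ${\rm Grad}_\by$.

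Finally, setting $h(\by):=\bn(\bx)\cdot\bn(\by)$ and observing that $h\,(\bn(\by)\times\bJ)$ is a tangential field lying in $T_{d}^{0,\alpha}(\Gamma)$ under the stated hypotheses, I would apply the integration-by-parts identity \eqref{iden1} with scalar $G^\kappa$ to transfer the surface gradient off $G^\kappa$, and then the product rule \eqref{iden2} to expand ${\rm Div}_\by\big(h\,(\bn(\by)\times\bJ)\big)={\rm Grad}_\by h\cdot(\bn(\by)\times\bJ)+h\,{\rm Div}_\by(\bn(\by)\times\bJ)$. This produces precisely the first two terms of \eqref{regu2}, and summing the tangential and normal contributions completes the proof. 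The $C^2$ regularity of $\Gamma$ and the assumption $\bn\times\bJ\in T_{d}^{0,\alpha}(\Gamma)$ are exactly what justify the Cauchy principal value manipulations and the closed-surface Green's formula \eqref{iden1}.
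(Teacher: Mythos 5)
Your proposal is correct and follows essentially the same route as the paper: pull the curl inside, use $\nabla_\bx G^\kappa=-\nabla_\by G^\kappa$, split $\nabla_\by G^\kappa$ into its surface-gradient and normal parts, peel off the normal piece as the double-layer term, and then apply \eqref{iden1} and \eqref{iden2} to the tangential piece. The only (cosmetic) difference is how the key tangential identity is obtained: you get $\bn(\bx)\cdot\bigl({\rm Grad}_\by G^\kappa\times \bJ\bigr)=-\bigl(\bn(\bx)\cdot\bn(\by)\bigr)\,{\rm Grad}_\by G^\kappa\cdot\bigl(\bn(\by)\times\bJ\bigr)$ coordinate-free via $\bJ=-\bn(\by)\times(\bn(\by)\times\bJ)$ and the BAC--CAB rule, whereas the paper reaches the same expression by projecting $\bn(\bx)\times\bJ$ onto the tangent plane and computing its components in the $(\bta,\btb)$ frame.
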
	
\begin{proof}
	The operator
	\begin{eqnarray*}
		&& \bn(\bx)\cdot \nabla_\bx\times \int_{\Gamma}G^\kappa(\bx,\by)\bJ(\by)ds_\by \\ &=& \bn(\bx)\cdot \int_\Gamma \nabla_\bx G^\kappa(\bx,\by)\times \bJ(\by)ds_\by \\
		&=&  \int_\Gamma \nabla_\by G^\kappa(\bx,\by) \cdot \bn(\bx) \times \bJ(\by)ds_\by \\
		&=& \int_\Gamma \left({\rm Grad}_\by G^\kappa(\bx,\by)+\bn(\by)\frac{\partial G^\kappa(\bx,\by)}{\partial \bn(\by)}\right)\cdot \bn(\bx) \times \bJ(\by) ds_\by \\
		&=& -\int_\Gamma G^\kappa(\bx,\by){\rm Div}_\by\left(\bn(\by)\times (\bn(\bx)\times \bJ(\by))\times \bn(\by)\right) ds_\by 
		-\bn(\bx)\cdot \int_\Gamma \frac{\partial G^\kappa(\bx,\by)}{\partial \bn(\by)} \bn(\by)\times \bJ(\by)ds_\by 
		\end{eqnarray*}
	Here the last equality is obtained by making use of the identity \eqref{iden1}. Combining identity \eqref{iden2}, we further have
	\begin{eqnarray*}	
		&& {\rm Div}_\by\left(\bn(\by)\times (\bn(\bx)\times \bJ(\by))\times \bn(\by)\right) \\
		&=& {\rm Div}_\by\left( (\bn(\bx)\cdot \bn(\by)J^1(\by))\btb(\by)
		-(\bn(\bx)\cdot \bn(\by)J^2(\by))\bta(\by)\right) \\
		&=& {\rm Grad} _\by(\bn(\bx)\cdot \bn(\by))\cdot (-J^2\bta+J^1\btb) +\bn(\bx)\cdot \bn(\by){\rm Div}_\by(-J^2\bta+J^1\btb)
	\end{eqnarray*}
which completes the proof.
\end{proof}
Based on Theorems \ref{them1} and \ref{them2}, we can transform the Cauchy-type singular operators $\mathcal{H}$ and $\mathcal{N}$ into operators with weak singularities only, which greatly simplify the numerical computation. In the next section, we will make use of these results and discuss the Fourier expansions of these operators along the azimuthal direction of the axisymmetric boundary $\Gamma$.
\begin{remark}
	Although the original boundary value problem \eqref{HelmholtzDec} has a unique solution for $\kp>0$ and $\ks>0$, the integral formulation \eqref{elastcisys} based on the single layer potentials \eqref{singlerep} does not admit a unique solution when $\kp$ is the eigenvalue of the interior Helmholtz Dirichlet problem or $\ks$ is the eigenvalue of the interior Maxwell problem with zero tangential electric field on $\Gamma$. Detailed analysis for the two dimensional case can be found in ~\cite{LL2019}. 
\end{remark}
\begin{remark}
	Another way to represent the potential functions $\phi$ and $\bpsi$ is given by
	\begin{eqnarray}\label{doublerep}
	\phi =D^{\kp}\sigma, \quad
	\bpsi = \nabla\times S^{\ks}\mathbf{J}, \mbox{ for } \bx\in \mathbb{R}^3\setminus \overline{\Omega},
	\end{eqnarray}
	where
	\begin{eqnarray*}
		D^{\kp}\sigma(\bx) &=& \int_{\Gamma} \frac{\partial G^{\kp}(\bx,\by)}{\partial \bn(\by)}\sigma(\by)ds_{\by},\label{doubler}  
	\end{eqnarray*}
	which does not admit a unique solution either  when $\kp$ is the eigenvalue of the interior Helmholtz Neumann problem or $\ks$ is the eigenvalue of the interior Maxwell problem with zero tangential magnetic field on $\Gamma$.    
\end{remark}
\begin{remark}
	To eliminate the resonance frequency, one can apply the combined field technique to represent  $\phi$ and $\bpsi$ as 
	\begin{eqnarray}\label{combinefield}
	\begin{cases}
	\phi =\lp D^{\kp}+\mi\lambda_0 S^{\kp}\rp \sigma, \\
	\bpsi = \lp \nabla\times S^{\ks}+ \frac{\mi \lambda_0}{\ks^{2}}\nabla\times \nabla\times S^{\ks}\rp \mathbf{J}, 
	\end{cases}
	\mbox{ for } \bx\in \mathbb{R}^3\setminus \overline{\Omega},
	\end{eqnarray}
	for some $\lambda_0>0$. However, since in this work we mainly focus on the high order numerical method for the elastic scattering from axisymmetric objects, we simply adopt the single layer representation \eqref{singlerep} by assuming that  $\kp$ is not the eigenvalue of the interior Helmholtz Dirichlet problem and $\ks$ is not the eigenvalue of the interior Maxwell problem. Extension of the algorithm to the integral representation \eqref{doublerep} and the combined field representation \eqref{combinefield} will be straightforward but tedious. In particular, we will need the regularization properties of three dimensional hypersingular operators. Details will be explored in the future work.   
\end{remark}

\begin{remark}
	Although our numerical scheme focuses on the axisymmetric bodies, the integral formulation \eqref{elastcisys}, as well as the regularization results in Theorems \ref{them1} and \ref{them2}, are applicable to general geometries with closed boundaries.
\end{remark}

\section{Fourier representation of the
  boundary integral operators}
\label{sec_fourier}
It is an area of active research to discretize integral equations in complex geometries in three dimensions to high order, which is highly non-trivial and rather computationally expensive~\cite{Bremer2012}. Some recent progress in this area can be found in \cite{GORV}. However, there exist many
important applications of elastic scattering from
axisymmetric objects (for instance, submarine detection, composite materials, etc).
In this case, variables can be separated in
cylindrical coordinates resulting in a system of decoupled line
integrals. The discretization and solution of integral equations along
curves in two dimensions is a much easier problem, and there exist many efficient
schemes~\cite{alpert1999, Brem2012, hao_2014, helsing2015}. In addition, the
resulting Fourier decomposition scheme easily parallelizes and can
address a range of rather complicated axisymmetric geometries. Discussion and details regarding the discretization of scalar-valued
integral equations along bodies of revolution are contained
in~\cite{helsing2014, YHM2012}. The vector-valued case, in particular integral equation
methods for Maxwell's equations, are discussed in~\cite{HK15, LAI2019}. In this section, we follow the discussion and notations in~\cite{LAI20171, LAI2019} to Fourier expand the operators in~\eqref{elastcisys}.

Consider an axisymmetric object $\Omega$ with rotational symmetry with
respect to the $z$-axis.  In other words, its
boundary~$\Gamma$ is generated by rotating curve $\gamma$ in
the $xz$-plane about the $z$-axis.
The goal is to design an efficient numerical
solver for the time-harmonic Navier
equations with the rigid boundary conditions along the surface of axisymmetric
objects, given in terms of the boundary integral system~\eqref{elastcisys}. It is convenient to consider the problem in cylindrical coordinates, which will be given as $(r,\theta,z)$, and we denote the
standard unit vectors by~$(\er,\et,\ez)$.  We assume that
the generating curve~$\gamma$ is parameterized counterclockwise
by~$(r(s),z(s))$, where~$s$ denotes arclength. This implies that the
unit tangential vector along the generating curve is~$\bt(s) = r'(s) \, \er +z'(s) \, \ez$, with $r'$ and $z'$
denoting differentiation with respect to arclength.  It is assumed that the generating curve $\gamma$ is smooth or contains a small number of corners. The unit tangential vectors on $\Gamma$ are given by $(\bt,\et)$. 
The unit exterior normal~$\bn$ is then given
by~$\bn(s) = z'(s) \, \er - r'(s) \, \ez$.
Since the cross section of ~$\Gamma$ in the
azimuthal direction is a circle, the density function $\sigma$ can be expanded in terms of the Fourier series with respect to~$\theta$,
\begin{equation}\label{expan1}
\sigma(r,\theta, z) = \sum_m   \sigma_m(r,z)e^{\mi m\theta}.
\end{equation}
 Similarly, the surface current~$\bJ$
on~$\Gamma$ can be written 
as~$\bJ = J^1 \, \bt + J^2 \, \et$. Taking the Fourier expansion of~$J^1$ and~$J^2$
with respect to~$\theta$ yields
\begin{equation}\label{expan2}
\bJ(r,\theta, z) = \sum_m  \lp J_m^1(r,z) \, \bt + J_m^2(r,z) \, \et \rp e^{\mi m\theta}.
\end{equation}
In the following, the dependence of the unit vectors on the
variables~$r,\theta,z$ will be omitted unless needed for clarity.

For numerical purpose, we need to investigate the Fourier expansions of the singular operators $\mathcal{K}$, $\mathcal{N}$, $\mathcal{H}$ and $\mathcal{M}$ in terms of the Fourier coefficients of $\sigma$ and $\bJ$ given in \eqref{expan1} and \eqref{expan2}. Let $(r_t, \theta_t, z_t)=(r(t), \theta_t,z(t))$ be a target point on $\Gamma$. Define the modal Green's functions as
\begin{align}
g_m^{\kappa,1}(r_t,z_t,r,z)& =\int_{0}^{2\pi}\frac{e^{\mi \kappa\rho}}{4\pi \rho}
e^{-\mi m\phi} \, d\phi, \label{green11}\\
g_m^{\kappa,2}(r_t,z_t,r,z)& =\int_0^{2\pi}\frac{e^{\mi \kappa\rho}}{4\pi \rho}
\cos{m\phi} \, \cos{\phi} \, d\phi, \label{green21}\\
g_m^{\kappa,3}(r_t,z_t,r,z)& = \int_{0}^{2\pi}\frac{e^{\mi \kappa\rho}}{4\pi
	\rho}
\sin{m\phi} \, \sin{\phi} \, d\phi, \label{green31}
\end{align}
where
\begin{equation}\label{eq:rho}
\rho =
\sqrt{r_t^2+r^2-2r_tr\cos{\phi}+(z_t-z)^2},
\end{equation}
 $\phi = \theta_t - \theta$ and $\kappa=\kp$  or $\ks$.

We begin by the Fourier expansion of single layer potentials.
\begin{lemma}\label{lemmascalpot}
	The single layer potential $S^\kappa\sigma$ in cylindrical coordinate
	has the Fourier expansion 
	\begin{align}\label{fourierdecompz}
	S^\kappa\sigma(r_t,\theta_t,z_t)
	= \sum_m \beta_m(r_t,z_t) e^{im\theta_t}
	\end{align}
	where 
	\begin{eqnarray}
	\beta^\kappa_m(r_t,z_t) = \int_{\gamma}  \sigma_m(s) \, r(s) \,
	g_m^{\kappa,1}(r_t,z_t,r(s),z(s)) \, ds. \label{equc0}
	\end{eqnarray}
\end{lemma}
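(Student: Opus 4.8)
The plan is to exploit the rotational symmetry of $\Gamma$, which turns the azimuthal integration into a convolution in the angular variable and hence diagonalizes the operator in the Fourier basis $\{e^{\mi m\theta}\}$. First I would parameterize a source point on $\Gamma$ in cylindrical coordinates as $\by=(r(s)\cos\theta,\,r(s)\sin\theta,\,z(s))$, with $s$ the arclength along $\gamma$ and $\theta\in[0,2\pi)$ the azimuthal angle; the two tangent vectors $\partial_s\by$ and $\partial_\theta\by$ are orthogonal with lengths $1$ and $r(s)$, so the surface measure factors as $ds_\by=r(s)\,d\theta\,ds$. Writing the target as $\bx=(r_t\cos\theta_t,\,r_t\sin\theta_t,\,z_t)$, a direct computation gives
\[
|\bx-\by|^2=r_t^2+r(s)^2-2r_t r(s)\cos(\theta_t-\theta)+(z_t-z(s))^2,
\]
so $|\bx-\by|=\rho$ as in \eqref{eq:rho} with $\phi=\theta_t-\theta$. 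The decisive structural fact is that $G^\kappa(\bx,\by)=e^{\mi\kappa\rho}/(4\pi\rho)$ depends on the two azimuthal angles only through their difference $\phi$.

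Next I would insert the Fourier expansion \eqref{expan1} of $\sigma$ and interchange summation with integration to obtain
\[
S^\kappa\sigma(\bx)=\sum_m\int_\gamma \sigma_m(s)\,r(s)\left[\int_0^{2\pi}\frac{e^{\mi\kappa\rho}}{4\pi\rho}\,e^{\mi m\theta}\,d\theta\right]ds,
\]
where I abbreviate $\sigma_m(s)=\sigma_m(r(s),z(s))$. In the inner integral I would substitute $\theta=\theta_t-\phi$; since $\rho$ depends on $\phi$ only through $\cos\phi$, the integrand is $2\pi$-periodic, so the interval may be shifted back to $[0,2\pi)$ without changing the value, giving
\[
\int_0^{2\pi}\frac{e^{\mi\kappa\rho}}{4\pi\rho}\,e^{\mi m\theta}\,d\theta=e^{\mi m\theta_t}\int_0^{2\pi}\frac{e^{\mi\kappa\rho}}{4\pi\rho}\,e^{-\mi m\phi}\,d\phi=e^{\mi m\theta_t}\,g_m^{\kappa,1}(r_t,z_t,r(s),z(s)),
\]
which is precisely the modal Green's function \eqref{green11}. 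Pulling $e^{\mi m\theta_t}$ outside the $s$-integral and identifying the remainder with $\beta_m^\kappa$ in \eqref{equc0} then yields the claimed expansion \eqref{fourierdecompz}.

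The only genuine obstacle is justifying the interchange of the sum over $m$ with the two integrations. I expect this to be routine rather than delicate: the kernel $G^\kappa$ is only weakly singular, hence integrable across the diagonal $\bx=\by$, and for a H\"older continuous density $\sigma\in C^{0,\alpha}(\Gamma)$ the coefficients $\sigma_m$ decay fast enough that the series converges in $L^1(\Gamma)$, so dominated convergence applies. The entire mathematical content of the lemma is the convolution-and-periodicity argument of the second paragraph; everything else is bookkeeping with the arclength parameterization and the Jacobian factor $r(s)$.
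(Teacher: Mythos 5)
Your argument is correct and is precisely the ``direct computation'' the paper invokes (it gives no proof beyond citing \cite{YHM2012,helsing2014}): the surface measure factors as $r(s)\,d\theta\,ds$, the kernel depends on the azimuthal angles only through their difference, and the shift $\theta=\theta_t-\phi$ together with $2\pi$-periodicity produces the modal Green's function $g_m^{\kappa,1}$ and the factor $e^{\mi m\theta_t}$. Nothing further is needed.
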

 It is obtained by direct computation, which is also given in \cite{YHM2012,helsing2014}. 
From Lemma \ref{lemmascalpot}, the Fourier expansion for the boundary operator $\mathcal{K}\sigma$ with respect to the azimuthal direction is given as
\begin{eqnarray}
 \mathcal{K}\sigma(r_t,\theta_t,z_t) = \sum_m \lp z'_t\frac{\partial \beta^{\kp}_m}{\partial r_t}-r'_t\frac{\partial \beta^{\kp}_m}{\partial z_t}\rp e^{\mi m\theta_t}.
\end{eqnarray}
where we denote the
gradient of~$\gamma$ at~$(r_t,z_t)$ as $\nabla\gamma(t) = (r'_t,z'_t)$.

\begin{lemma}\label{lemmavecpot}
  The vector potential $S^\kappa\bJ$ in cylindrical coordinate
  has the Fourier expansion 
\begin{align}\label{fourierdecompj}
  S^\kappa\bJ(r_t,\theta_t,z_t)
  = \sum_m\lp c_m^{\kappa,1}(r_t,z_t) \, \er + c_m^{\kappa,2}(r_t,z_t) \, \et
  +c_m^{\kappa,3}(r_t,z_t) \, \ez \rp e^{im\theta_t}
\end{align}
where 
\begin{align*}
  c_m^{\kappa,1}(r_t,z_t) &=  \int_{\gamma} J^1_m(s) \, r(s) \, r'(s) \,
                    g^{\kappa,2}_m(r_t,z_t,r(s),z(s)) \, ds  \notag \\
                    &-\mi\int_{\gamma}J^2_m(s) \, r(s) \,
                    g_m^{\kappa,3}(r_t,z_t,r(s),z(s)) \, ds,  \\
  c_m^{\kappa,2}(r_t,z_t)  &=   \mi\int_{\gamma} J^1_m(s) \, r(s) \, r'(s) \,
                     g_m^{\kappa,3}(r_t,z_t,r(s),z(s)) \, ds \notag \\
                    &+\int_{\gamma} J^2_m(s) \, r(s) \,
                     g_m^{\kappa,2}(r_t,z_t,r(s),z(s))
                     \, ds, \label{equc2} \\
  c_m^{\kappa,3}(r_t,z_t) &= \int_{\gamma}  J^1_m(s) \, r(s) \, z'(s) \,
                    g_m^{\kappa,1}(r_t,z_t,r(s),z(s)) \, ds. 
\end{align*}
\end{lemma}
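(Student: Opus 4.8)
The plan is to compute $S^\kappa\bJ$ directly in cylindrical coordinates by separating the azimuthal integration, paralleling the derivation of Lemma~\ref{lemmascalpot} but keeping careful track of how the moving frames at the source and target points are related. Parameterizing a source point as $\by = (r(s)\cos\theta,\, r(s)\sin\theta,\, z(s))$, the surface element factors as $ds_\by = r(s)\,d\theta\,ds$, so that
\begin{equation*}
S^\kappa\bJ(\bx) = \int_\gamma r(s)\int_0^{2\pi} G^\kappa(\bx,\by)\,\bJ(\by)\,d\theta\,ds.
\end{equation*}
The crucial observation is that the scalar kernel $G^\kappa(\bx,\by) = e^{\mi\kappa\rho}/(4\pi\rho)$ depends on the azimuthal variables only through $\rho$, hence only through $\phi = \theta_t-\theta$, and that as a function of $\phi$ it is $2\pi$-periodic and \emph{even}, since $\rho$ enters through $\cos\phi$ in \eqref{eq:rho}. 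Write $K(\phi) = e^{\mi\kappa\rho}/(4\pi\rho)$ for this kernel.

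Next I would rewrite the source frame in terms of the target frame. A rotation by $\phi$ gives
\begin{align*}
\er(\theta) &= \cos\phi\,\er(\theta_t) - \sin\phi\,\et(\theta_t), \\
\et(\theta) &= \sin\phi\,\er(\theta_t) + \cos\phi\,\et(\theta_t),
\end{align*}
with $\ez$ unchanged. Substituting the Fourier expansion $\bJ = \sum_m \lp J_m^1(r'\er(\theta) + z'\ez) + J_m^2\et(\theta)\rp e^{\mi m\theta}$ from \eqref{expan2} and using $e^{\mi m\theta} = e^{\mi m\theta_t}e^{-\mi m\phi}$, the integrand becomes an explicit combination of $\er(\theta_t)$, $\et(\theta_t)$, $\ez$ whose scalar coefficients all have the form $K(\phi)\,p(\phi)\,e^{-\mi m\phi}$ with $p\in\{1,\cos\phi,\sin\phi\}$. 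After the change of variable $\theta\mapsto\phi$, each azimuthal integral runs over a full period, so periodicity lets me integrate $\phi$ over $[0,2\pi]$, and the overall factor $e^{\mi m\theta_t}$ pulls out to match the form of \eqref{fourierdecompj}.

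The key step — and the only place where any care is needed — is to collapse these azimuthal integrals onto the three modal Green's functions $g_m^{\kappa,1}$, $g_m^{\kappa,2}$, $g_m^{\kappa,3}$ defined in \eqref{green11}--\eqref{green31}. Here the parity of $K$ does the work: expanding $e^{-\mi m\phi} = \cos m\phi - \mi\sin m\phi$ and using that $K$ is even while $\cos\phi\sin m\phi$ and $\sin\phi\cos m\phi$ are odd, the odd harmonics integrate to zero and one obtains
\begin{align*}
\int_0^{2\pi} K\,e^{-\mi m\phi}\,d\phi &= g_m^{\kappa,1}, \\
\int_0^{2\pi} K\cos\phi\,e^{-\mi m\phi}\,d\phi &= g_m^{\kappa,2}, \\
\int_0^{2\pi} K\sin\phi\,e^{-\mi m\phi}\,d\phi &= -\mi\, g_m^{\kappa,3}.
\end{align*}
Collecting the $\er(\theta_t)$-, $\et(\theta_t)$-, and $\ez$-components and inserting these identities reproduces the three stated formulas for $c_m^{\kappa,1}$, $c_m^{\kappa,2}$, $c_m^{\kappa,3}$; in particular the $\mi$ and $-\mi$ prefactors trace back precisely to the $-\mi\,g_m^{\kappa,3}$ identity above, while the $\ez$-component receives a contribution only from the $J_m^1 z'\ez$ term and hence involves $g_m^{\kappa,1}$ alone. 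The whole computation is a direct, if bookkeeping-heavy, evaluation, so the only genuine subtlety is the parity argument that isolates the correct cosine/sine harmonics; the interchange of the $\theta$-integration with the Fourier sum is justified by the convergence already implicit in the single-layer setting.
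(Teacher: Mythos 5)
Your proposal is correct and is exactly the direct computation the paper has in mind (it states this lemma without proof, citing only "direct computation" for the analogous scalar case): rotating the source frame onto the target frame, pulling out $e^{\mi m\theta_t}$, and using the evenness of the kernel in $\phi$ to reduce the azimuthal integrals to $g_m^{\kappa,1}$, $g_m^{\kappa,2}$, and $-\mi\,g_m^{\kappa,3}$. The resulting coefficients, including the signs of the $\mi$ prefactors and the fact that only $J^1_m z'$ feeds the $\ez$-component, all match the stated formulas.
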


Using Lemma~\ref{lemmavecpot}, we can obtain the azimuthal
Fourier decomposition of the boundary operator ~$\mathcal{M}\bJ$:
\begin{multline}
 \mathcal{M}\bJ(r_t,\theta_t,z_t)
 = \\ \sum_{m}
 \lp
 \lp \frac{\partial c^{\ks,1}_m}{\partial z_t}-\frac{\partial c^{\ks,3}_m}{\partial
   r_t} \rp \bt 
   -\lp
 \frac{z_t'}{r_t}
 \lp c^{\ks,2}_m+r_t\frac{\partial c^{\ks,2}_m}{\partial r_t} -\mi mc^{\ks,1}_m \rp
+r_t' \lp \frac{\mi m}{r_t}c_m^{\ks,3}-\frac{\partial c^{\ks,2}_m}{\partial
  z_t} \rp\rp\et
\rp e^{\mi m\theta_t}.
\end{multline}
In order to derive the Fourier representations of boundary operators $\mathcal{H}$ and $\mathcal{N}$, by Theorems \ref{them1} and \ref{them2}, we need the Fourier expansions of the double layer boundary operators
\begin{eqnarray}
\mathcal{D}^\kappa(\sigma\bn )(\bx) &=& \int_\Gamma \frac{\partial G^\kappa(\bx,\by)}{\partial \bn(\by)}  \bn(\by) \sigma(\by)ds_\by \\
\mathcal{D}^\kappa(\bn\times\bJ)(\bx) & =& \int_\Gamma \frac{\partial G^\kappa(\bx,\by)}{\partial \bn(\by)}  \bn(\by) \times \bJ(\by)ds_\by
\end{eqnarray}
for $\bx\in \Gamma$, $\kappa=\kp$ or $\ks$, $\sigma\in C^{1,\alpha}(\Gamma)$ and $\bJ\in T^{0,\alpha}(\Gamma)$. For brevity, we drop the dependence on $(r_t,z_t,r(s),z(s))$ for all the derivatives of $g^{\kappa,i}_m$, $i=1,2,3$.

\begin{lemma}
	The double layer boundary operator $\mathcal{D}^\kappa(\sigma\bn )$ in cylindrical coordinate
	has the Fourier expansion  
	\begin{align}\label{fourierdecomp2}
	\mathcal{D}^\kappa(\sigma\bn )(r_t,\theta_t,z_t)
	= \sum_m\left( a_m^{\kappa,1}(r_t,z_t) \, \er + a_m^{\kappa,2}(r_t,z_t) \, \et
	+a_m^{\kappa,3}(r_t,z_t) \, \ez \right) e^{\mi m\theta_t}
	\end{align}
	where 
	\begin{align*}
	a_m^{\kappa,1}(r_t,z_t) &= \int_{\gamma} \sigma_m(s) \, r(s) \, z'(s) \,
	 \lp z'(s)\frac{\partial g^{\kappa,2}_m}{\partial r}-r'(s)\frac{\partial g^{\kappa,2}_m}{\partial z} \, \rp ds  \\
	a_m^{\kappa,2}(r_t,z_t) &= \mi\int_{\gamma} \sigma_m(s) \, r(s) \, z'(s) \,
	\lp z'(s) \frac{\partial g^{\kappa,3}_m}{\partial r}-r'(s)\frac{\partial g^{\kappa,3}_m}{\partial z} \, \rp ds  \\
	a_m^{\kappa,3}(r_t,z_t) &= -\int_{\gamma}  \sigma_m(s) \,  r'(s)r(s) \,
	\lp z'(s) \frac{\partial g^{\kappa,1}_m}{\partial r}-r'(s)\frac{\partial g^{\kappa,1}_m}{\partial z} \, \rp ds 
	\end{align*}
	
\end{lemma}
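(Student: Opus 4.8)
The plan is to carry out the azimuthal Fourier integration of the vector-valued kernel directly, reducing each cylindrical component of the output field to one of the modal Green's functions $g_m^{\kappa,i}$. First I would parametrize the source as $\by=(r(s)\cos\theta,\,r(s)\sin\theta,\,z(s))$ and the target as $\bx=(r_t\cos\theta_t,\,r_t\sin\theta_t,\,z_t)$, so that $|\bx-\by|=\rho$ with $\rho$ as in \eqref{eq:rho}, $\phi=\theta_t-\theta$, and the surface measure is $ds_\by=r(s)\,d\theta\,ds$. The source normal is $\bn(\by)=z'(s)\,\er(\theta)-r'(s)\,\ez$. Since $G^\kappa$ depends on $\by$ only through $\rho$ and $\bn(\by)$ carries no $\et$ component, the $\partial_\theta$ part of $\nabla_\by$ drops out and
\[
\frac{\partial G^\kappa}{\partial\bn(\by)}=\bigl(z'(s)\,\partial_r-r'(s)\,\partial_z\bigr)G^\kappa=:D\,G^\kappa,
\]
where $\partial_r,\partial_z$ act on the source coordinates $r,z$ (hence the absence of a subscript $t$ in the statement). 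The only $\theta$-dependence of the vector $\bn(\by)$ sits in $\er(\theta)$, which I re-express in the target frame through the rotation $\er(\theta)=\cos\phi\,\er(\theta_t)-\sin\phi\,\et(\theta_t)$, giving $\bn(\by)=z'(s)\cos\phi\,\er(\theta_t)-z'(s)\sin\phi\,\et(\theta_t)-r'(s)\,\ez$.

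Next I would insert the Fourier series $\sigma(\by)=\sum_n\sigma_n(r,z)\,e^{\mi n\theta_t}e^{-\mi n\phi}$ and collect the three cylindrical components of the integrand $\sigma(\by)\,(D\,G^\kappa)\,\bn(\by)\,r(s)$. The $\er(\theta_t)$-, $\et(\theta_t)$-, and $\ez$-components carry the scalar factors $z'(s)\cos\phi$, $-z'(s)\sin\phi$, and $-r'(s)$, respectively. Because $\rho$, and hence $G^\kappa$ and $D\,G^\kappa$, is even in $\phi$, the $\phi$-integrals collapse onto the modal Green's functions:
\[
\int_0^{2\pi}(D\,G^\kappa)\,e^{-\mi n\phi}\cos\phi\,d\phi=D\,g_n^{\kappa,2},\quad
\int_0^{2\pi}(D\,G^\kappa)\,e^{-\mi n\phi}\sin\phi\,d\phi=-\mi\,D\,g_n^{\kappa,3},\quad
\int_0^{2\pi}(D\,G^\kappa)\,e^{-\mi n\phi}\,d\phi=D\,g_n^{\kappa,1},
\]
where in each case the operator $D$ (acting only on $r,z$) commutes with the $\phi$-integral so the derivatives land on $g_n^{\kappa,i}$. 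Assembling the remaining $s$-integral and relabeling $n\to m$ then produces exactly the three coefficients $a_m^{\kappa,i}$ in the statement, with $D\,g_m^{\kappa,i}=z'(s)\,\partial_r g_m^{\kappa,i}-r'(s)\,\partial_z g_m^{\kappa,i}$; the $\mi$ in $a_m^{\kappa,2}$ arises precisely from the $-\mi$ in the $\sin\phi$ integral combined with the $-\sin\phi$ factor carried by the $\et$-component.

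The step demanding the most care is the parity bookkeeping in the $\phi$-integrals. One must split $e^{-\mi n\phi}=\cos n\phi-\mi\sin n\phi$ into its even and odd parts and use that pairing an odd trigonometric factor against the even kernel $D\,G^\kappa$ annihilates the integral, so that under the integral $\cos\phi\,e^{-\mi n\phi}\mapsto\cos n\phi\cos\phi$, $\sin\phi\,e^{-\mi n\phi}\mapsto-\mi\sin n\phi\sin\phi$, and $e^{-\mi n\phi}\mapsto\cos n\phi$ — which are exactly the integrands defining $g_m^{\kappa,2}$, $g_m^{\kappa,3}$, and $g_m^{\kappa,1}$. Once this reduction and the basis rotation for $\bn(\by)$ are handled correctly, the rest (commuting $D$ through the integral and collecting coefficients of $e^{\mi m\theta_t}$) is routine and mirrors the computation already used for Lemmas \ref{lemmascalpot} and \ref{lemmavecpot}.
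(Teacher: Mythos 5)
Your computation is correct and is exactly the direct azimuthal Fourier integration that the paper leaves implicit for this lemma (it states the result without proof, by analogy with Lemma \ref{lemmascalpot}): the reduction of the normal derivative to $z'(s)\partial_r - r'(s)\partial_z$ on the source coordinates, the rotation $\er(\theta)=\cos\phi\,\er(\theta_t)-\sin\phi\,\et(\theta_t)$, and the parity argument collapsing the $\phi$-integrals onto $\partial g_m^{\kappa,2}$, $-\mi\,\partial g_m^{\kappa,3}$, and $\partial g_m^{\kappa,1}$ all check out, including the sign and the factor $\mi$ in $a_m^{\kappa,2}$. No gaps.
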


\begin{lemma}
	The double layer boundary operator $\mathcal{D}^\kappa(\bn\times \bJ )$ in cylindrical coordinate has the Fourier expansion 
	\begin{align}\label{fourierdecomp3}
	\mathcal{D}^\kappa(\bn\times \bJ )(r_t,\theta_t,z_t)
	= \sum_m\left( b_m^{\kappa,1}(r_t,z_t) \, \er + b_m^{\kappa,2}(r_t,z_t) \, \et
	+b_m^{\kappa,3}(r_t,z_t) \, \ez \right) e^{im\theta_t}
	\end{align}
	where 
	\begin{eqnarray*}
	b_m^{\kappa,1}(r_t,z_t) &=&  \int_{\gamma} J^2_m(s) \, r(s) \, r'(s) \,
	\lp z'(s)\frac{\partial g^{\kappa,2}_m}{\partial r} -r'(s)\frac{\partial g^{\kappa,2}_m}{\partial z}\rp \, ds  \notag \\ 
	&&+\mi\int_{\gamma}J^1_m(s) \, r(s) \,
	\lp z'(s)\frac{\partial g^{\kappa,3}_m}{\partial r} -r'(s)\frac{\partial g^{\kappa,3}_m}{\partial z}\rp \, ds,  \\
	b_m^{\kappa,2}(r_t,z_t) &=&  \mi\int_{\gamma} J^2_m(s) \, r(s) \, r'(s) \,
	\lp z'(s)\frac{\partial g^{\kappa,3}_m}{\partial r} -r'(s)\frac{\partial g^{\kappa,3}_m}{\partial z}\rp  \, ds \notag \\
	&&-\int_{\gamma} J^1_m(s) \, r(s) \,
	\lp z'(s)\frac{\partial g^{\kappa,2}_m}{\partial r} -r'(s)\frac{\partial g^{\kappa,2}_m}{\partial z}\rp 
	\, ds,  \\
	b_m^{\kappa,3}(r_t,z_t) &=& \int_{\gamma}  J^2_m(s) \, r(s) \, z'(s) \,
	\lp z'(s)\frac{\partial g^{\kappa,1}_m}{\partial r} -r'(s)\frac{\partial g^{\kappa,1}_m}{\partial z}\rp  \, ds. 
	\end{eqnarray*} 
\end{lemma}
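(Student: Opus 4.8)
The plan is to reduce this lemma to the already-established Fourier expansion of the vector single layer potential in Lemma~\ref{lemmavecpot}, by observing that $\mathcal{D}^\kappa(\bn\times\bJ)$ differs from $S^\kappa\bJ$ in only two controlled ways: the tangential density $\bJ$ is replaced by $\bn\times\bJ$, and the kernel $G^\kappa(\bx,\by)$ is replaced by its source normal derivative $\partial G^\kappa(\bx,\by)/\partial\bn(\by)$, which in the double layer operator acts solely on the Green's function and not on the density. Once these two modifications are tracked through the azimuthal Fourier analysis, the coefficients $b_m^{\kappa,i}$ should fall out by direct substitution.

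First I would record the cross-product identities in the local frame. Using the arclength relations together with $\bn=z'(s)\er-r'(s)\ez$ and $\bt=r'(s)\er+z'(s)\ez$, a short computation in cylindrical coordinates gives $\bn\times\bt=-\et$ and $\bn\times\et=\bt$. Hence, writing $\bJ=J^1\bt+J^2\et$,
\[
\bn(\by)\times\bJ(\by)=J^2(\by)\,\bt(\by)-J^1(\by)\,\et(\by),
\]
which is again a tangential field, with Fourier coefficients $(J_m^2,-J_m^1)$ in the $(\bt,\et)$ basis. Thus $\bn\times\bJ$ is an admissible density for the vector single layer machinery, and the role of $(J_m^1,J_m^2)$ in Lemma~\ref{lemmavecpot} is here played by $(J_m^2,-J_m^1)$.

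Next I would handle the kernel. Since $\bn(\by)\cdot\nabla_\by=z'(s)\,\partial_r-r'(s)\,\partial_z$ has no azimuthal component, the source normal derivative commutes with the $\theta$-integration that defines the modal Green's functions: the rotation weights $e^{-\mi m\phi}$, $\cos m\phi\cos\phi$ and $\sin m\phi\sin\phi$ in $g_m^{\kappa,1},g_m^{\kappa,2},g_m^{\kappa,3}$ depend only on $\phi$ and $m$, while the meridian coordinates $(r,z)$ enter solely through $\rho$, so differentiation passes under the $d\phi$ integral. Consequently, replacing $G^\kappa$ by $\partial G^\kappa/\partial\bn(\by)$ amounts precisely to replacing each modal kernel $g_m^{\kappa,i}$ by $z'(s)\,\partial_r g_m^{\kappa,i}-r'(s)\,\partial_z g_m^{\kappa,i}$ — exactly the source normal derivative substitution already used to produce the Fourier expansion of $\mathcal{D}^\kappa(\sigma\bn)$ from the single layer of Lemma~\ref{lemmascalpot}.

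Combining the two modifications, I would take the coefficients $c_m^{\kappa,1},c_m^{\kappa,2},c_m^{\kappa,3}$ of Lemma~\ref{lemmavecpot}, substitute $(J_m^1,J_m^2)\mapsto(J_m^2,-J_m^1)$, and apply the kernel replacement $g_m^{\kappa,i}\mapsto z'\partial_r g_m^{\kappa,i}-r'\partial_z g_m^{\kappa,i}$; collecting the $\er$, $\et$ and $\ez$ components then yields $b_m^{\kappa,1}$, $b_m^{\kappa,2}$ and $b_m^{\kappa,3}$ respectively. The only genuinely delicate point is the frame-rotation bookkeeping — correctly pairing the source-frame components of the density with the target frame through the $\cos\phi,\sin\phi$ couplings that distinguish $g_m^{\kappa,2}$ from $g_m^{\kappa,3}$ — but this is already carried out in the proof of Lemma~\ref{lemmavecpot}, so here it is reused rather than repeated. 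Justification for differentiating under the integral sign and for the H\"older-continuous boundary extension is supplied by the hypothesis $\bn\times\bJ\in T_d^{0,\alpha}(\Gamma)$ together with the limiting results quoted from \cite{DR-shu2}.
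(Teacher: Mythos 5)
Your proposal is correct and matches the route the paper intends: the paper states this lemma without proof (as with Lemma~\ref{lemmavecpot}, it is "obtained by direct computation"), and your reduction --- substituting the rotated tangential components $(J_m^1,J_m^2)\mapsto(J_m^2,-J_m^1)$ from $\bn\times\bJ=J^2\bt-J^1\et$ into the coefficients $c_m^{\kappa,i}$ of Lemma~\ref{lemmavecpot} while replacing each modal kernel $g_m^{\kappa,i}$ by $z'\partial_r g_m^{\kappa,i}-r'\partial_z g_m^{\kappa,i}$ --- reproduces $b_m^{\kappa,1}$, $b_m^{\kappa,2}$, $b_m^{\kappa,3}$ exactly. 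The key observations (that $\bn(\by)$ has no $\et$ component, so the normal derivative commutes with the azimuthal integration defining the modal Green's functions, and that the frame-rotation bookkeeping is inherited from Lemma~\ref{lemmavecpot}) are both sound.
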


For the other components in the right hand sides of formulae \eqref{regu1} and \eqref{regu2}, their Fourier representations can be found by combining Lemmas \ref{lemmascalpot},  \ref{lemmavecpot} and the Fourier expansions of surface gradient and divergence for axisymmetric shape $\Gamma$, which are given in the appendix. From these expansions, one can see that in order to evaluate the boundary operators ~$\mathcal{K}$, ~$\mathcal{N}$, ~$\mathcal{H}$ and~$\mathcal{M}$
rapidly, the values of~$a^{\kappa,1}_m$,~$b^{\kappa,2}_m$ and~$c^{\kappa,3}_m$, $i=1,2,3$, as well as the derivatives of $c^{\kappa,3}_m$, need to be computed efficiently.  The evaluation of these coefficients can be performed in two steps: (1) Compute
$g^{\kappa,i}_m$ and its derivatives, and then (2) integrate~$g_m^{\kappa,i}$ and its
derivatives (according to the above formulae) along the
generating curve~$\gamma$. Unfortunately, there are no numerically useful closed-form
expressions for~$g_m^{\kappa,i}$. Expansions of these functions in terms of half-order Hankel functions are  expensive to
compute~\cite{conway_cohl} and designing robust contour integration
methods for large values of~$m$ is quite
complicated~\cite{gustafsson2010}.  Furthermore, 
since~$g_m^{\kappa,i}$ has a logarithmic singularity when $(r_t,z_t)=(r(s),z(s))$, special care must be taken during the integration along~$\gamma$. Direct evaluation based on the adaptive integration has been proven to be time consuming and will be the bottleneck of the resulting solver~\cite{LAI20171}. Therefore, an efficient numerical scheme is needed to accelerate the two steps. Here we adopt an FFT based algorithm for the evaluation of $g_m^{\kappa,i}$ (as well as their derivatives), and apply the generalized Gaussian quadrature for the singular integration along $\gamma$. To avoid redundancy of the existing work, we only introduce some general ideas in the next section and refer interested readers to~\cite{helsing2014, LAI2019, YHM2012} for more details.

\section{Fast kernel evaluation and generalized Gaussian quadrature}
\label{kerneleval}
\subsection{Fast kernel evaluation}
To evaluate the modal Green's functions~\eqref{green11}-\eqref{green31}, including their derivatives,  we adopt the method based on FFT, recurrence relations and kernel splitting, as discussed in~\cite{epstein_2018,helsing2015,helsing2014}, to accelerate the computation of these kernel functions. 

Based on the fact that~$\rho$ in equation~\eqref{eq:rho} is
an even function with respect to~$\phi$ on~$[0,2\pi]$, we observe that
\begin{equation}
  \label{equrelation}
    g_m^{\kappa,2} = \frac{g_{m+1}^{\kappa,1}  + g_{m-1}^{\kappa,1}}{2}, \qquad 
    g_m^{\kappa,3} = -\frac{g_{m+1}^{\kappa,1} -  g_{m-1}^{\kappa,1}}{2}
\end{equation}
for any mode $m$. In other words,
we need only to evaluate~$g_m^{\kappa,1}$ and its derivatives efficiently, as the
other two kernel functions can be obtained by linear combinations \eqref{equrelation}. Therefore the goal is to efficiently evaluate all~$g^{\kappa,1}_m$ for $|m|\leq M$, where~$M>0$ is the bandwidth of the data in the azimuthal direction. In general, unless the wave numbers $\kp$ and $\ks$ are particularly high, only a modest number of
Fourier modes $m$ are needed for high-precision discretizations of
the integral equations. Once the incoming data has been Fourier transformed along the
azimuthal direction on the boundary, the number of Fourier modes $M$
needed in the discretization can be determined based on the decay of the
coefficients of the incoming field. 

The evaluation of $g^{\kappa,1}_m$ for $|m|\leq M$ can be split into two cases. To ease the discussion, we let $(r(s),z(s))$ be replaced by~$(r_s,z_s)$ for simplicity. 
\begin{enumerate}
	\item When the target~$(r_t,z_t)$ is far away from the source~$(r_s,z_s)$,
	the integral in~\eqref{green11} can be discretized using the periodic
	trapezoidal rule with $2M+1$-points and therefore the fast Fourier
	transform (FFT) can be used to evaluate all the ~$g^{\kappa,1}_m$ for $m=-M,-M+1,\ldots,M$.
	
	\item When~$(r_t,z_t)$ is near~$(r_s,z_s)$, the integrand is nearly
	singular and a prohibitively large number of discretization points
	would be needed to obtain sufficient quadrature accuracy.
	To overcome this difficulty, we
	apply the kernel splitting technique.
	The main idea is to explicitly split the
	integrand into smooth and singular parts. Fourier coefficients of the
	smooths parts can be obtained numerically via the FFT. The coefficients of the singular part can be obtained analytically via recurrence relations. The Fourier coefficients of the original kernel can then be obtained via discrete convolution. It has been successfully applied in~\cite{epstein_2018, helsing2014, LAI2019, YHM2012}, and therefore we skip the details that are given in the reference mentioned. In particular,~\cite{epstein_2018} provides estimates on the size of the FFT needed and other important tuning parameters.
\end{enumerate}
 
Finally, it is easy to show that in both cases, the computational complexity is $\mathcal{O}(M\log M)$, which implies the scheme is highly efficient. 

\subsection{Generalized Gaussian quadrature}
Once a scheme to evaluate the modal Green's functions and
their derivatives is in place, the next step is to discretize each decoupled modal
integral equation along the generating curve~$\gamma$.  Here we use a
Nystr\"om-like method for discretizing the integral equations.  Since
the modal Green's functions have logarithmic
singularities~\cite{cohl_1999, conway_cohl}, any efficient
Nystr\"om-like scheme will require a quadrature that accurately
evaluates weakly singular integral operators.  For high order
integration, one can construct the quadrature based on the kernel
splitting technique as in~\cite{Kress2010}. However, this will become
tedious given the various order of singularities in the derivatives
of~$g_m^{\kappa,i}$. For our numerical simulations, we implement a
panel-based discretization scheme using generalized Gaussian
quadratures. An in-depth discussion of generalized Gaussian
quadrature schemes is given in~\cite{BGR2010, hao_2014}. The panel-based discretization scheme of this paper, as opposed to that based on hybrid-Gauss trapezoidal rules~\cite{alpert1999, epstein_2018, oneil2018}, allows for adaptive discretizations.
Application of this panel-based discretization to the electromagnetic scattering from axisymmetric surfaces in three dimensions with edges and corners can be found in \cite{LAI2019}.

More specifically, given a kernel function~$g$ with logarithmic singularity and smooth
function~$\sigma$, our goal is to evaluate, with high-order accuracy,
the integral
\begin{equation}\label{singint}
\mathcal S \sigma(\bx) = \int_{\gamma} g(\bx,\by) \, \sigma(\by) \, ds_\by .
\end{equation}
Assume that the generating curve~$\gamma$ is smooth and parametrized
as~$\gamma(s)$, where $s$ is the arclength. The total arclength will be
denoted by $L$. We first divide~$[0,L]$ into~$N$ panels. The division
can be either uniform or nonuniform, depending on the particular geometry.
Each of the~$N$ panels, and therefore any function supported on it,
is discretized using $p$~scaled Gauss-Legendre nodes.
In a  Nystr\"om discretization scheme, the layer potential above
is approximated given as
\begin{equation}
  \mathcal S \sigma(\bx_{lm}) \approx
  \sum_{i = 1}^{N} \sum_{j=1}^{p} w_{lmij} \, g(\bx_{lm},\by_{ij}) \,
  \sigma(\vectorsym{y}_{ij}),
\end{equation}
where~$\bx_{lm}$ is the~$m$th Gauss-Legendre node on
panel~$l$,~$\by_{ij}$ is the~$j$th Gauss-Legendre node on panel~$i$,
and~$w_{lmij}$ is the Nystr\"om quadrature weight. For singular integrals,  it is generally difficult to have a uniform quadrature for all $\bx_{lm}$. We therefore split the integral \eqref{singint} into two parts and consider them separately, i.e.
\begin{equation}\label{singint2}
\mathcal S \sigma(\bx_{lm}) = \int_{\gamma\backslash \{\gamma_{l-1}\cup\gamma_{l}\cup \gamma_{l+1}\} } g(\bx_{lm},\by) \, \sigma(\by) \, ds_\by + \int_{\gamma_{l-1}\cup\gamma_{l}\cup \gamma_{l+1} } g(\bx_{lm},\by) \, \sigma(\by) \, ds_\by,
\end{equation}
where $\gamma_l$ denotes the $l$th panel. 

For integration on non-adjacent panels, namely, the first integral in the right hand side of \eqref{singint2}, we simply set
$w_{lmij}= w_{ij}$,
where~$w_{ij}$ is the standard scaled Gauss-Legendre weight on
panel~$i$.  Therefore, for non-adjacent panels, the order of accuracy
is expected to be~$2p-1$, although rigorous analysis and error estimates require knowledge of the regularity of the density function~$\sigma$.

For panels that are adjacent or self-interacting with $\gamma_{l}$, i.e., $\gamma_{l-1}\cup\gamma_{l}\cup \gamma_{l+1}$, it is difficult to derive efficient, high-order accurate Nystr\"om schemes in which the quadrature nodes are the same
as the discretization nodes (i.e. the points at which $\sigma$ are
sampled, the $p$ Gauss-Legendre nodes on each panel). Often it can be
very beneficial to use additional or at least different quadrature
\emph{support nodes} for approximating the integral. These support
nodes and corresponding weights in our case were computed using the
scheme of~\cite{BGR2010}. More specifically, to compute the integral over
panel~$\gamma_i$, for $i=l-1,\ l, \ l+1$, at a target~$\bx_{lm}$,
we approximate as
\begin{equation}\label{eq_nyslike}
  \begin{aligned}
     \int_{\gamma_{i} } g(\bx_{lm},\by) \, \sigma(\by) \, ds_\by\approx \sum_{j=1}^{p} c_{ij} \int_{\gamma_i} g(\bx_{lm},\by)
    \, P^i_j(\by)  \, ds_\by \approx \sum_{j=1}^{p}c_{ij} \, I_j(\bx_{lm}),
\end{aligned}
\end{equation}
where~$P^i_j$ is the scaled Legendre polynomial of degree~$j-1$ on
panel~$\gamma_i$ and~$c_{ij}$ are the Legendre expansion coefficients
of the degree~$j-1$ interpolating polynomial for~$\sigma$.  For a
fixed~$\bx_{lm}$, each $I_j(\bx_{lm})$ contains no unknowns and can
be evaluated via a pre-computed high-order generalized Gaussian
quadrature. The number of nodes required in these quadratures may vary.
Details on generating these quadratures were discussed
in~\cite{BGR2010}, and an analogous Nystr\"om-like discretization
scheme along surfaces in three dimensions was discussed
in~\cite{Bremer2012}. 

Furthermore, note that each $c_{ij}$ can be obtained via application
of a~$p\times p$ transform matrix acting on values of~$\sigma$. Denote
this transform matrix as~$\matrixsym{U}^{i}$, and it's entries
as~$U^{i}_{jn}$. Inserting this into~\eqref{eq_nyslike}, we have
\begin{equation}\label{eq_nyslike2}
  \begin{aligned}
    \int_{\gamma_{i} } g(\bx_{lm},\by) \, \sigma(\by) \, ds_\by\approx 
      \sum_{j=1}^{p}   \sum_{n=1}^p U^{i}_{jn} \, \sigma(\by_{in})
    \, I_j(\bx_{lm}) 
    =    \sum_{n=1}^p  \lp \sum_{j=1}^{p} U^{i}_{jn} \, I_j(\bx_{lm})
    \rp \sigma(\by_{in}).
\end{aligned}
\end{equation}
By simply exchanging the indexes $n$ and $j$, we obtain
\begin{equation}\label{eq_nyslike3}
\begin{aligned}
\int_{\gamma_{i} } g(\bx_{lm},\by) \, \sigma(\by) \, ds_\by\approx 
 \sum_{j=1}^p  \lp \sum_{n=1}^{p} U^{i}_{nj} \, I_n(\bx_{lm})
\rp \sigma(\by_{ij}),
\end{aligned}
\end{equation}
which implies we can take  $w_{lmij} \, g(\bx_{lm},\by_{ij})\approx \sum_{n=1}^{p} U^{i}_{nj} \, I_n(\bx_{lm}) $  to approximate the second part of the singular integral \eqref{singint2}.

In the case that~$\gamma$ is only piecewise smooth, a graded mesh near
the corner is used to maintain high-order accuracy. That is, after uniform
discretization of each smooth component of~$\gamma$, we perform a
dyadic refinement on panels that impinge on each corner point. Then
we apply the $p$th order generalized Gaussian quadrature on each of the
refined panels. This procedure, along with proper quadrature
weighting, has been shown to achieve high-order accuracy for curves with corners~\cite{Brem2012, HO08}.

\section{Numerical examples}
\label{sec_numeri}

In this section, we apply our algorithm to computing elastic
scattering from various elastically rigid axisymmetric objects.
In order to verify the
accuracy of our algorithm, we choose to test the extinction theorem by
solving the integral formulation~\eqref{elastcisys} with an
artificial solution. Specifically, we let the incident field in the exterior of ~$\Omega$ be generated by a polarized point source ~$\Phi(\bx,\by_0)$ with $\by_0$ located inside~$\Omega$, i.e.
\begin{equation}
  \begin{aligned}
    \boldsymbol{u}^{\rm inc}(\bx) = -\Phi(\bx,\by_0)\boldsymbol{p},
  \end{aligned}
\end{equation}
where
\begin{equation}
\Phi(\bx,\by_0)=\frac{1}{\mu}\lp G^{\ks}(\bx,\by_0)\boldsymbol{I} + \frac{1}{\ks^2}\nabla_{\bx}^T \nabla_{\bx} \lp G^{\ks}(\bx,\by_0)-G^{\kp}(\bx,\by_0)\rp \rp 
\end{equation}
is the the fundamental solution of the Navier equation~\cite{L2014}, $\boldsymbol{I}$ is the $3\times 3$ identity matrix, and~$\boldsymbol{p}$ is the polarization vector.  Since the incident field generated by the polarized point source satisfies the elastic equation in~$\mathbb{R}^3\setminus \overline{\Omega}$ with radiation condition, by extinction theorem, the scattered field is simply given by
\[
\usc(\bx) = \Phi(\bx,\by_0)\boldsymbol{p}.
\]
 Therefore, if the boundary condition on $\Gamma$ is specified by $\boldsymbol{u}^{\rm inc}$, by uniqueness, we should recover the field $\usc(\bx)$ by solving equation~\eqref{elastcisys}. Throughout all the numerical examples, we let
the source ~$\by_0$ be $(0.1,\, 0.1,\, 0.1)$ with
$\mu=1$ and $\boldsymbol{p}=(1,0,0)^T$.

In addition, we also solve the integral formulation~\eqref{elastcisys} with an incident
plane wave:
\begin{equation}
  \begin{aligned}
    \boldsymbol{u}^{\rm inc}(\bx)= (\boldsymbol{d}\cdot \boldsymbol{p})\boldsymbol{d}\mathrm{e}^{\mathrm{i} \kappa_{\mathfrak p}\boldsymbol{d}\cdot \bx}+(\boldsymbol{d}\times \boldsymbol{p})\times \boldsymbol{d} 
    \mathrm{e}^{\mathrm{i}\kappa_{\mathfrak s} \boldsymbol{d}\cdot \bx},
\end{aligned}
\end{equation}
where~$\vct{d}$ is the propagation direction and $\vct{d}\times \vct{p}$ is an
orthogonal polarization vector.
Throughout all the examples, we let
\begin{equation}
  \begin{aligned}
  \vct{d} &= \lp \cos\theta_1\sin\phi_1,\, \sin\theta_1\sin\phi_1,
  \, \cos\phi_1 \rp,\\
  \vct{p} &= \lp \cos\theta_2\sin\phi_2, \,  \sin\theta_2\sin\phi_2,
  \, \cos\phi_2 \rp, 
\end{aligned}
\end{equation}
with $\theta_1 = \pi/4$, $\phi_1 = \pi/8$, $\theta_2=\pi/5$, and
$\phi_2 = \pi/10$. The far field pattern of the scattered wave can be
found by letting~$|\bx|\rightarrow\infty$ in~\eqref{scatrep} and
using the asymptotic form of the Green's function~\eqref{greenf}:
\begin{eqnarray}\label{farfieldform}
  \boldsymbol{u}^{\infty}(\theta,\phi) = -\frac{1}{4\pi} 
   \lp \int_\Gamma \nabla_{\by}e^{-i\kp \, 
      {\hat{\bx}} \cdot \by } \sigma(\by) \, ds(\by)  
              + \int_\Gamma  \nabla_{\by}e^{-i\ks
              {\hat{\bx}} \cdot \by }\times \bJ(\by) \, ds(\by) \rp ,  
\end{eqnarray}
where~$\theta\in[0,2\pi]$ is the azimuthal angle,~$\phi\in[0,\pi]$
is the angle with respect to the positive~$z$ axis, and
${\hat{\bx}}=(\cos\theta\sin\phi,\sin\theta\sin\phi,\cos\phi)$
is a point on the unit sphere. 

We make use of the following notations in the subsequent tables that
present data from our scattering experiments:
\begin{itemize}
\item $\kp$: the compressional wavenumber,
\item $\ks$: the shear wavenumber,
\item $N_f$: the number of Fourier modes in the azimuthal direction
  used to resolve the solution. In other words, the Fourier modes are $-N_f,-N_f+1, \dots, N_f-1,N_f$.
\item $N_{pts}$: the total number of points used to
  discretize~$\gamma$,
\item $T_{matgen}$: the time (secs.) to construct the relevant matrix
  entries for all integral equations,
\item $T_{solve}$: the time  (secs.) to compute the
  $\mtx{LU}$-factorization of the system matrices for all modes,
\item $T_{syn}$: the time (secs.) to synthesize the density and surface current from their Fourier modes,
\item $E_{error}$: the absolute $L^2$ error of the elastic
  field measured at points placed on a sphere that encloses $\Omega$.
\end{itemize}  

The accuracy that controls the kernel evaluation (i.e. where to
truncate Fourier coefficients in the discrete convolutions) and the
number of Fourier modes in the decomposition of the incident wave is
set to be~$1\times 10^{-12}$. We apply the Nystr\"om-like discretization
described earlier in Section~\ref{kerneleval} to discretize the line integral
equations with $p=30$ on each panel.  In other words, we use $30$th-order
generalized Gaussian quadrature rules which contain~$30$ support nodes and
weights for self-interacting panels (which vary according to the
location of the target) and~$56$ support nodes and weights on adjacent
panels (which are target independent). All experiments were implemented
in~\textsc{Fortran 90} and carried out on a laptop with
four 2.0Ghz Intel cores and 16Gb of
RAM. We made use of~\textsc{OpenMP} for parallelism across decoupled
Fourier modes, and simple block $\mtx{LU}$-factorization
using~\textsc{LAPACK} for matrix inversion. Various fast direct
solvers such as \cite{GYM2012,HG2012,JL2014,Liu2015} could be applied
if larger problems were involved.  

Lastly, it is generally difficult to analytically parameterize the
generating curve with respect to arclength. In the following examples,
we have discretized the generating curve at panel-wise Gauss-Legendre points
in some parameter~$t$, not necessarily arclength. Any previous formulae which assumed arclength can be easily adjusted with factors of~$ds/dt$ to account for the parameterization metric. 

\begin{remark}
  The examples we tested here are all for the elastic scattering of a single
  object. The algorithm, however, can be extended to the scattering of
  multiple axisymmetric objects by transforming the incident field
  into the local coordinate of each object~\cite{GG2013,hao_2014}. In particular, we may use FMM developed for acoustic and Maxwell equations to accelerate the scattering among multi-particles, as presented in~\cite{LL2019} for the two dimensional case. This will be explored in our future investigation.
\end{remark}

%\subsection*{Example 1: Scattering from a sphere}
%Our first test is evaluate the scattered field of a penetrable sphere with radius 2. 

\subsection*{Example 1: Scattering from a sphere}

Consider a sphere with the generating curve given by
\begin{equation}
  r(t) = 2\sin(t), \quad z(t) = 2\cos(t), \quad t\in[0,\pi].
\end{equation}
We solve the scattering problem of the sphere at several wavenumbers,
with accuracy results shown in Table~\ref{table_1}. By using at least 12
points per wavelength, we are able to achieve $8$ digits of accuracy
for most of the cases. Note that the CPU time is dominated by the
generation of matrix elements, which roughly scales
quadratically with the number of unknowns.   For the same number of unknowns, the computational time depends linearly on the number of
Fourier modes. Although the linear system for each mode is decoupled
from the other modes, which makes parallelization straightforward, the
computational time~$T_{solve}$ presented in Table~\ref{table_1} is the
total matrix inversion time via sequential solve.  Despite
its~$\mathcal{O}(N_fN_{pts}^3)$ complexity, it is still much smaller
than~$T_{matgen}$. Similar phenomenon has been observed in~\cite{LAI2019} for the  electromagnetic scattering from penetrable axisymmetric bodies. The time to synthesize the charge and current from their modes, as can be done by FFT, is negligible compared to the other parts during the solve. In addition, once the matrix is factorized, additional solves for new right hand sides are very fast.

For an incident plane wave with wavenumber~$\kp = 10$ and
$\ks=20$. Figure~\ref{figure_sphere}(A) shows the imaginary part of
the $x$ component of the scattered field $\usc$ in Cartesian coordinates, which is denoted by $\Im{\usc_1}$, at
the hemisphere with radius $4$. Figure~\ref{figure_sphere}(B) shows the imaginary part of the $x$ component of the far field pattern, denoted by $\Im\boldsymbol{u}^{\infty}_1$. Figure~\ref{figure_sphere}(C) plots the pointwise error for the point source scattering by comparing the numerical solution with the exact solution at the hemisphere. We can see that 9 digits accuracy is obtained for the elastic scattering from a sphere.

 \begin{figure}[h]
 	\centering
 	\begin{subfigure}[b]{.32\linewidth}
 		\centering
 		\includegraphics[width=.95\linewidth]{./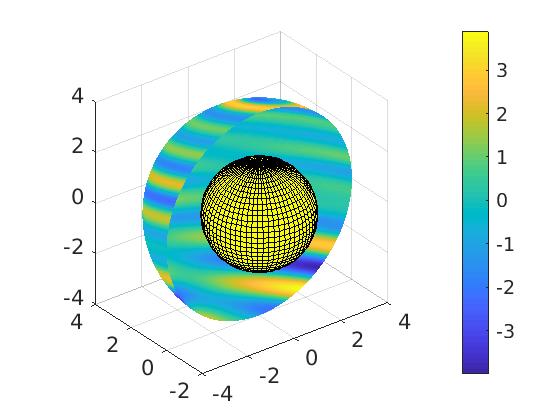}
 		\caption{$\Im{\usc_1}$.}
 	\end{subfigure}
 	\hfill
 	\begin{subfigure}[b]{.32\linewidth}
 		\centering
 		\includegraphics[width=.95\linewidth]{./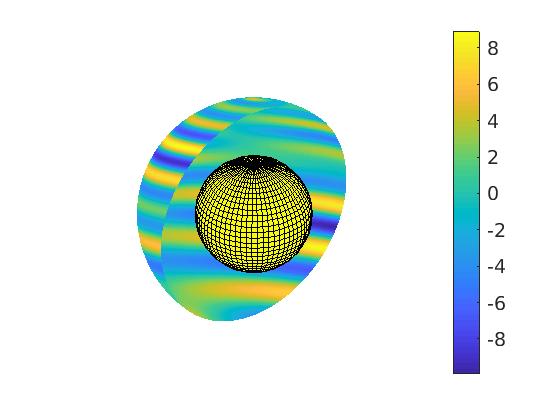}
 		\caption{$\Im\boldsymbol{u}^{\infty}_1$.}
 	\end{subfigure}
 	\hfill
 	\begin{subfigure}[b]{.32\linewidth}
 		\centering
 		\includegraphics[width=.95\linewidth]{./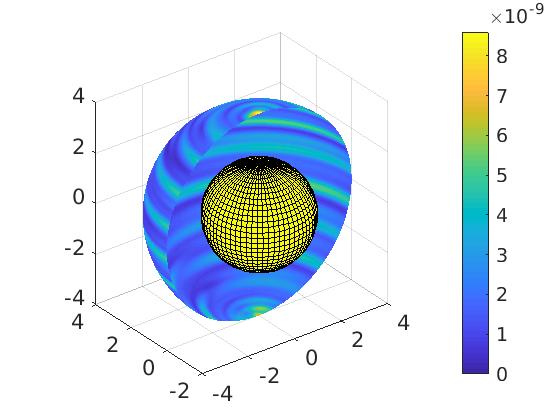}
 		\caption{Pointwise error.}
 	\end{subfigure}
 	\caption{Elastic scattering of a sphere with wavenumber~$\kp=10.0$ and
 		~$\ks=20.0$.}
 	\label{figure_sphere}
 \end{figure}
 
\begin{table}[t]
  \centering
  \caption{Results for the elastic scattering of a sphere at
    different wavenumber.}
  \label{table_1}
  \begin{tabular}{|c|c|c|c|c|c|c|c|}
    \hline
    $\kp$ &$\ks$ & $N_f$ & $N_{pts}$ & $T_{matgen}(s)$ & $T_{solve}(s)$ &$T_{syn}$ & $E_{error}$  \\
    \hline
    $1$ & $2$ & 12 & 120 & 2.33E+0 & 1.82E-1 & 6.55E-2 & 1.14E-11\\
    $1$ &$5$ & 12 &180 & 5.37E+0 & 4.18E-1 & 9.70E-2 & 8.51E-11 \\
    $1$ &$10$ & 15 &240 & 9.65E+0 & 9.46E-1  & 1.53E-1 & 3.82E-9 \\
    \hline
    $5$ &$2$ & 11 &180 & 5.38E+0 & 3.76E-1 & 8.68E-2 & 1.59E-10 \\
    $5$ &$10$ & 14 & 240 & 9.41E+0 & 9.01E-1 & 1.45E-1 & 2.27E-10 \\
    $5$ &$20$ & 18 & 300 & 1.83E+1 & 1.98E+0 & 2.52E-1 & 3.99E-9 \\
    \hline
    $10$ &$5$ & 14 & 240 & 9.96E+0 & 9.19E-1 & 1.48E-1 & 3.41E-10 \\
    $10$ &$20$ & 18 & 300 & 1.81E+1 & 1.97E+0 & 2.48E-1 & 4.96E-9 \\
    $10$ &$40$ & 24 & 480 & 5.88E+1 & 8.70E+0 & 5.77E-1 &1.84E-8\\
    \hline
    $20$ &$5$ & 17 &300 & 1.78E+1 & 1.89E+0 & 2.39E-1 & 1.26E-9 \\
    $20$ &$10$ & 17 & 300 & 1.81E+1 & 1.87E+0 & 2.37E-1 & 1.76E-9 \\
    $20$ &$40$ & 24 & 480 & 6.13E+1 & 8.63E+0 & 5.43E-1 & 1.37E-8 \\
    \hline
  \end{tabular}
\end{table}

 \subsection*{Example 2: Scattering from an ellipsoid}
 
 We next consider scattering from an ellipsoid whose generating
 curve is given by
 \begin{equation}
 r(t) = \sin(t), \quad z(t) = 2\cos(t), \quad t\in[0,\pi].
 \end{equation}

% Due to the corner singularity of the generating curve, dyadic
% refinement is applied to each segment.  In particular, for the two
% panels that are adjacent to the corner, we refine each of them
% multiple times, yielding a graded mesh. The size of the last panel depends on the wavenumber, and it can be on the order of $10^{-5}$ for wavenumber $k_e=20$. 
% This dyadic refinement leads
% to more than 1000 discretization points for all tested
% wavenumbers. Detailed results are shown in Table~\ref{table_4}, with
% accuracies compared to the exact solution. 
 
 The results are comparable with the sphere case. As we can see from Table \ref{table_2}, at small wavenumber, we
 obtain approximately~10 digits of accuracy. The accuracy
 slowly deteriorates to $7$ digits as the wavenumber increases.
 This is due to a stronger singularity near the poles (at $t=0$ and $t=1$) for
 higher wavenumbers.
 More digits can be obtained if additional refinement were implemented.
 Once again, if the system is factored, additional solves for new right-hand sides
 are inexpensive. This can be applied, for instance, to efficiently
 solving the elastic scattering problem with multiple incidences.
 
 For an incident plane wave with~$\kp=10.0$ and~$\ks=20.0$,
 Figure~\ref{figure_ellipsoid}(A) and (B) plot the imaginary parts of the $x$ component of the scattered elastic field and the far field pattern, respectively.  Figure~\ref{figure_ellipsoid}(C) shows the pointwise error for point source scattering by comparing with the exact solution. One can see that $8$ digits accuracy is obtained by roughly 20 points per wavelength.  
 
 \begin{figure}[h]
 	    \centering
 	    \begin{subfigure}[b]{.32\linewidth}
 	      \centering
 	      \includegraphics[width=.95\linewidth]{./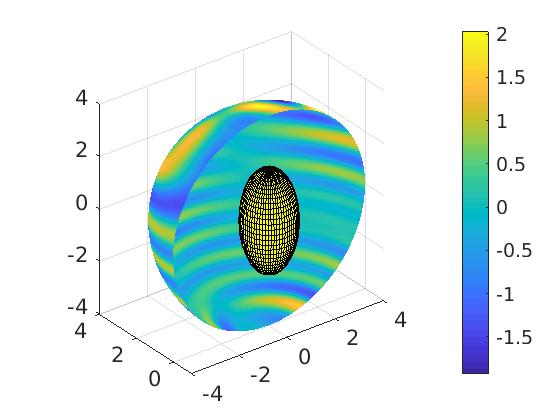}
 	      \caption{$\Im{\usc_1}$.}
 	    \end{subfigure}
 	    \hfill
 	    \begin{subfigure}[b]{.32\linewidth}
 	      \centering
 	      \includegraphics[width=.95\linewidth]{./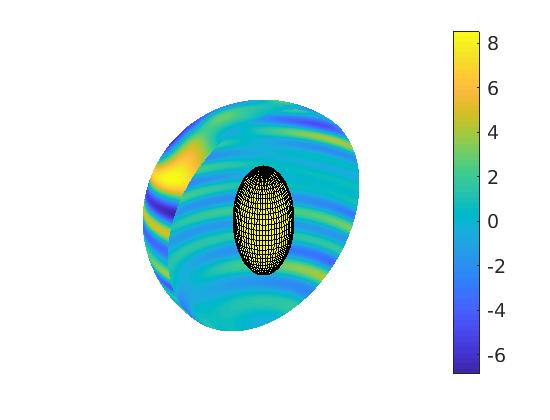}
 	      \caption{$\Im\boldsymbol{u}^{\infty}_1$.}
 	    \end{subfigure}
     \hfill
      	    \begin{subfigure}[b]{.32\linewidth}
     	\centering
     	\includegraphics[width=.95\linewidth]{./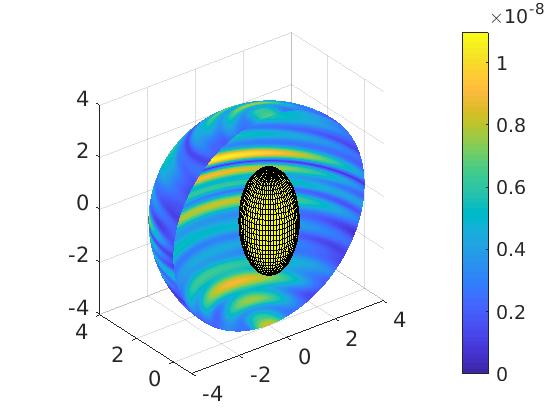}
     	\caption{Pointwise error.}
     \end{subfigure}
 	      \caption{Elastic scattering of an ellipsoid with wavenumber~$\kp=10.0$ and
 	      	~$\ks=20.0$.}
 	      \label{figure_ellipsoid}
 	  \end{figure}
 \begin{table}[t]
 	\centering
 	\caption{Results for the elastic scattering of an ellipsoid at
 		different wavenumber.}
 	\label{table_2}
 	\begin{tabular}{|c|c|c|c|c|c|c|c|}
 		\hline
 		$\kp$ &$\ks$ & $N_f$ & $N_{pts}$ & $T_{matgen}(s)$ & $T_{solve}(s)$ &$T_{syn}$ & $E_{error}$  \\
 		\hline
 		$1$ & $2$ & 16 & 120 & 2.35E+0 & 2.32E-1 & 9.23E-2 & 3.55E-12\\
 		$1$ &$5$ & 16 &180 & 5.60E+0 & 5.19E-1 & 1.44E-1 & 6.79E-11 \\
 		$1$ &$10$ & 16 &240 & 9.52E+0 & 1.07E+0  & 2.01E-1 & 4.58E-10 \\
 		\hline
 		$5$ &$2$ & 15 &180 & 5.55E+0 & 5.41E-1 & 1.15E-1 & 2.11E-10 \\
 		$5$ &$10$ & 16 & 240 & 9.56E+0 & 1.04E+0 & 1.83E-1 & 2.92E-10 \\
 		$5$ &$20$ & 19 & 300 & 1.69E+1 & 2.07E+0 & 2.59E-1 & 3.81E-9 \\
 		\hline
 		$10$ &$5$ & 15 & 240 & 9.38E+0 & 9.63E-1 & 1.53E-1 & 2.53E-9 \\
 		$10$ &$20$ & 18 & 300 & 1.65E+1 & 1.97E+0 & 2.50E-1 & 4.89E-9 \\
 		$10$ &$40$ & 24 & 480 & 5.40E+1 & 8.66E+0 & 5.77E-1 &9.79E-8\\
 		\hline
 		$20$ &$5$ & 18 &300 & 1.71E+1 & 2.09E+0 & 2.53E-1 & 2.45E-9 \\
 		$20$ &$10$ & 18 & 300 & 1.68E+1 & 1.98E+0 & 2.53E-1 & 1.75E-9 \\
 		$20$ &$40$ & 24 & 480 & 5.77E+1 & 8.65E+0 & 5.82E-1 & 7.37E-8 \\
 		\hline
 	\end{tabular}
 \end{table}
\subsection*{Example 3: Scattering from a rotated starfish}

For the third example, we consider an axisymmetric object whose
generating curve is given by
\begin{equation}
  \begin{aligned}
    r(t) &= [2+0.5\cos(5\pi (t-1))]\, \cos(\pi(t-0.5)),  \\
    z(t) &= [2+0.5\cos(5\pi (t-1))]\, \sin(\pi(t-0.5)),
  \end{aligned} 
\end{equation}
for $t\in [0,1]$. The generating curve is  open but gives rise to a smooth surface when crossing the~$z$-axis. We refer to this object as the rotated
\emph{starfish}, as shown in Figure~\ref{figure_wigg}.   We therefore
apply a uniform panel discretization to the parameter
space~$[0,1]$. Table~\ref{table_3} provides the accuracy results for
various wavenumbers and discretization refinements.  We obtain~$7$
to~$9$ digits of accuracy by using a sufficient number of
discretization points per wavelength. The computational time is again
dominated by the matrix generation,~$T_{matgen}$. As the matrix is
generated and factored, the time for additional solves is negligible.

Figure \ref{figure_wigg}(A) shows the scattering results with a
plane wave incidence for~$\kp=10$ and~$\ks = 20$. The far field
pattern is given in \ref{figure_wigg}(B). Pointwise error for point source scattering is shown in \ref{figure_wigg}(C). Roughly 7 digits accuracy is obtained for all the points on the hemisphere with radius 4. 
\begin{figure}[h]
	    \centering
	    \begin{subfigure}[b]{.32\linewidth}
	      \centering
	      \includegraphics[width=.95\linewidth]{./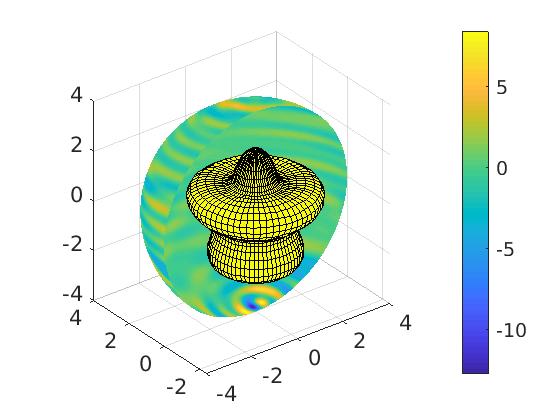}
	      \caption{$\Im{\usc_1}$.}
	    \end{subfigure}
	    \hfill
	    \begin{subfigure}[b]{.32\linewidth}
	      \centering
	      \includegraphics[width=.95\linewidth]{./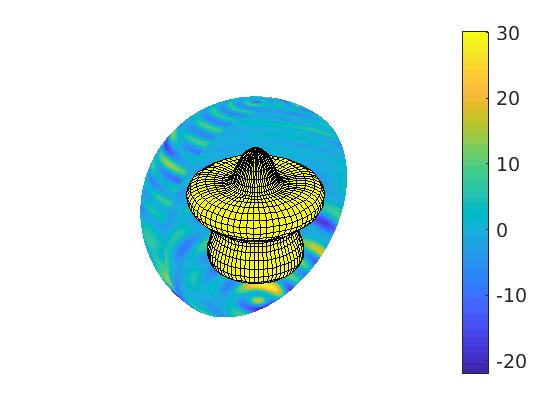}
	      \caption{$\Im\boldsymbol{u}^{\infty}_1$.}
	    \end{subfigure}
    \hfill
    \begin{subfigure}[b]{.32\linewidth}
    	\centering
    	\includegraphics[width=.95\linewidth]{./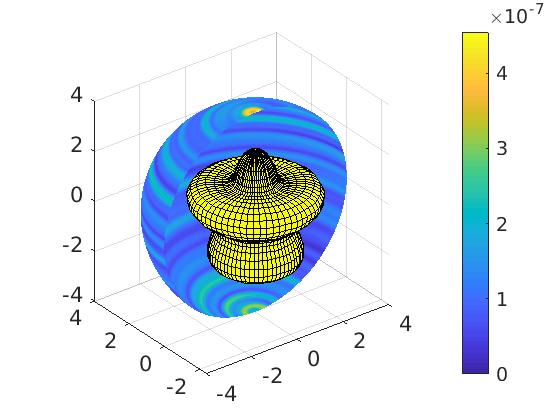}
    	\caption{Pointwise error.}
    \end{subfigure}
	    \caption{Elastic scattering of a rotated
	    	\emph{starfish} with wavenumber~$\kp=10.0$ and
	    	~$\ks=20.0$.}
	    \label{figure_wigg}
	  \end{figure}

\begin{table}[t]
	\centering
	\caption{Results for the elastic scattering of a rotated
		\emph{starfish} at
		different wavenumber.}
	\label{table_3}
	\begin{tabular}{|c|c|c|c|c|c|c|c|}
		\hline
		$\kp$ &$\ks$ & $N_f$ & $N_{pts}$ & $T_{matgen}(s)$ & $T_{solve}(s)$ &$T_{syn}$ & $E_{error}$  \\
		\hline
		$1$ & $2$ & 13 & 300 & 1.52E+1 & 1.46E+0 & 1.68E-1 & 2.06E-10\\
		$1$ &$5$ & 13 &360 & 2.13E+1 & 2.28E+0 & 2.05E-1 & 2.95E-10 \\
		$1$ &$10$ & 16 &420 & 3.31E+1 & 3.75E+0  & 2.63E-1 & 1.72E-8 \\
		\hline
		$5$ &$2$ & 12 &360 & 2.07E+1 & 2.11E+0 & 1.91E-1 & 1.66E-9 \\
		$5$ &$10$ & 15 & 420 & 3.35E+1 & 3.73E+0 & 2.63E-1 & 2.14E-8 \\
		$5$ &$20$ & 18 & 480 & 4.87E+1 & 6.34E+0 & 4.54E-1 & 1.63E-7 \\
		\hline
		$10$ &$5$ & 14 & 420 & 3.23E+1 & 3.47E+0 & 2.51E-1 & 1.14E-9 \\
		$10$ &$20$ & 18 & 480 & 4.88E+1 & 6.38E+0 & 4.53E-1 & 2.55E-7 \\
		$10$ &$40$ & 24 & 600 & 1.04E+2 & 1.63E+1 & 7.12E-1 &3.05E-7\\
		\hline
		$20$ &$5$ & 17 &480 & 4.72E+1 & 6.06E+0 & 4.44E-1 & 1.01E-7 \\
		$20$ &$10$ & 17 & 480 & 4.76E+1 & 6.06E+0 & 4.37E-1 & 1.25E-7 \\
		$20$ &$40$ & 24 & 600 & 1.04E+2 & 1.68E+1 & 7.09E-1 & 3.16E-7 \\
		\hline
	\end{tabular}
\end{table}

\subsection*{Example 4: Scattering from a droplet}
	In this example, we consider the scattering of a droplet with parametrization of the generating curve given by
	\begin{equation}
		\begin{aligned}
			    r(t) &= 4\sin(\pi t)\cos[0.5\pi(t-1.5)],  \\
			    z(t) &= 4\sin(\pi t)\sin[0.5\pi(t-1.5)]+2,
		\end{aligned}
	\end{equation} 	
	for $t\in[0.5,1]$. It has a point singularity at $t=1$, as shown by Figure \ref{figure_drop}. To resolve this singularity, we first apply a uniform panel discretization to the parameter space $[0.5,1]$. Then for the two panels that are adjacent to the singular point, we apply four times dyadic refinement, which yields a graded mesh. The scattering result for a point source at different wavenumber is given in Table~\ref{table_4}. The table shows more than 7 digits accuracy for the scattering at various wavenumber, which implies the point singularity of the droplet is fully resolved. 
	
	Similarly, we give the scattering result with a plane wave incidence for~$\kp=10$ and~$\ks = 20$ in Figure \ref{figure_drop}(A) and (B). Figure~\ref{figure_drop}(C) shows that 8 digits accuracy can be obtained for point source scattering. This is done in less than one minute computational time, which demonstrates the efficiency of our solver. 
	\begin{figure}[h]
	  	\centering
	  	\begin{subfigure}[b]{.32\linewidth}
	  		\centering
	  		\includegraphics[width=.95\linewidth]{./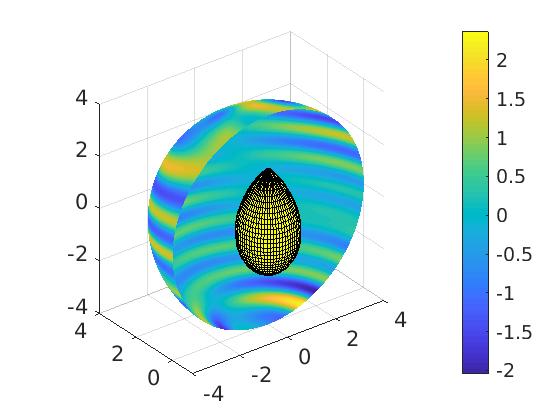}
	  		\caption{$\Im{\usc_1}$.}
	  	\end{subfigure}
	  	\hfill
	  	\begin{subfigure}[b]{.32\linewidth}
	  		\centering
	  		\includegraphics[width=.95\linewidth]{./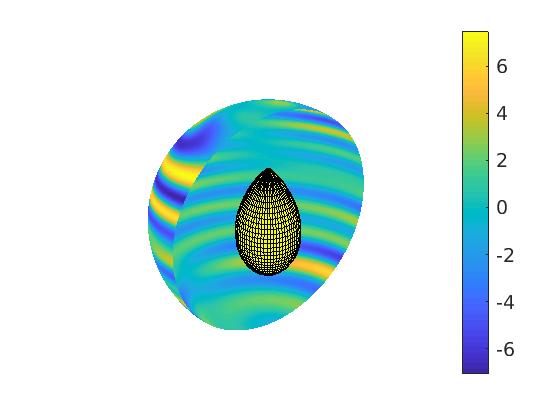}
	  		\caption{$\Im\boldsymbol{u}^{\infty}_1$.}
	  	\end{subfigure}
  	\hfill
  	\begin{subfigure}[b]{.32\linewidth}
  		\centering
  		\includegraphics[width=.95\linewidth]{./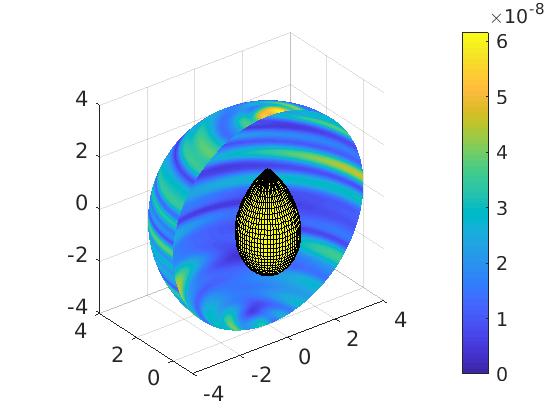}
  		\caption{Pointwise error.}
  	\end{subfigure}
	  	\caption{Elastic scattering of a droplet
	  		 with wavenumber~$\kp=10.0$ and
	  		~$\ks=20.0$.}
	  	\label{figure_drop}
	  \end{figure}

\begin{table}[t]
	\centering
	\caption{Results for the elastic scattering of a droplet at
		different wavenumber.}
	\label{table_4}
	\begin{tabular}{|c|c|c|c|c|c|c|c|}
		\hline
		$\kp$ &$\ks$ & $N_f$ & $N_{pts}$ & $T_{matgen}(s)$ & $T_{solve}(s)$ &$T_{syn}$ & $E_{error}$  \\
		\hline
		$1$ & $2$ & 16 & 300 & 1.66E+1 & 1.91E+0 & 2.40E-1 & 2.38E-10\\
		$1$ &$5$ & 15 &360 & 2.24E+1 & 2.56E+0 & 2.28E-1 & 3.14E-9 \\
		$1$ &$10$ & 16 &420 & 3.38E+1 & 4.02E+0  & 3.14E-1 & 3.26E-8 \\
		\hline
		$5$ &$2$ & 15 &360 & 2.25E+1 & 2.57E+0 & 2.30E-1 & 1.68E-9 \\
		$5$ &$10$ & 16 & 420 & 3.41E+1 & 3.98E+0 & 3.12E-1 & 2.57E-8 \\
		$5$ &$20$ & 19 & 480 & 4.78E+1 & 6.65E+0 & 4.63E-1 & 4.22E-8 \\
		\hline
		$10$ &$5$ & 15 & 420 & 3.33E+1 & 3.73E+0 & 2.69E-1 & 3.43E-9 \\
		$10$ &$20$ & 19 & 480 & 4.81E+1 & 6.69E+0 & 4.63E-1 & 3.42E-8 \\
		$10$ &$40$ & 24 & 600 & 9.82E+1 & 1.65E+1 & 6.98E-1 &9.11E-8\\
		\hline
		$20$ &$5$ & 18 &480 & 4.68E+1 & 6.33E+0 & 4.51E-1 & 1.38E-8 \\
		$20$ &$10$ & 18 & 480 & 4.69E+1 & 6.33E+0 & 4.49E-1 & 1.48E-8 \\
		$20$ &$40$ & 24 & 600 & 9.55E+1 & 1.66E+1 & 6.93E-1 & 3.61E-7 \\
		\hline
	\end{tabular}
\end{table}

\section{Conclusion}
\label{sec_con}
In this paper, we provided a derivation of the
boundary integral equation for the elastic scattering  from a three dimensional rigid obstacle in isotropic media based on Helmholtz decomposition.
Even though the resulting integral equation system is
not second-kind (see~\cite{LL2019} for discussions in two dimensional case), it remains
relatively well-conditioned even when the boundary has a modest number of
edges or geometric singularities.  Our
numerical algorithm strongly takes advantage of the axisymmetric
geometry  by using a Fourier-based separation of variables in
the azimuthal angle to obtain a
sequence of decoupled integral equations on a cross section of the
geometry. Using FFTs, we are able to efficiently evaluate the modal Green's
functions and their derivatives.
High-order accurate convergence is observed when discretizing the
integral equations using generalized Gaussian quadratures and an
adaptive Nystr\"om-like method.
Numerical examples show that the
algorithm can efficiently and accurately  solve the scattering problem from various axisymmetric objects, even in the presence of corner singularities. Extension of this work to multi-particle scattering, as well as the applications to inverse elastic scattering are still under investigation. We will report them in the future.

\appendix
\section{Fourier expansion for the surface gradient and divergence}

	Let $\Gamma$ be the boundary of a three dimensional axisymmetric object with parametrization given by $$\lp r(s)\cos\theta,r(s)\sin\theta,z(s)\rp, $$ and $(\bta,\btb)$ be the unit tangential vectors on $\Gamma$. Here we do not require the parameter $s$ to be arclength of the generating curve of $\Gamma$. 
	\begin{lemma}\label{surfdiv}
	The surface gradient of a function $\sigma\in C^{1,\alpha}(\Gamma)$ with Fourier expansion \eqref{expan1} is given by
	\begin{eqnarray}
	{\rm Grad} \sigma =\sum_m \left(\frac{1}{\sqrt{r'^2+z'^2}}\frac{\partial\sigma_m}{\partial s} \bta + \frac{\mi m }{r} \sigma_m \btb \right)e^{\mi m\theta }.
	\end{eqnarray}
	The surface divergence of a tangential vector $\bJ=J^1\bta+J^2\btb\in T_d^{0,\alpha}(\Gamma)$ with Fourier expansion \eqref{expan2} is given by
	\begin{eqnarray}
	{\rm Div} \bJ=\sum_m \left(\frac{1}{r\sqrt{r'^2+z'^2}}\left(r'(s)J_m^1+r (J_m^1)'\right) + \frac{\mi m }{r} J_m^2\right)e^{\mi m\theta }. 
	\end{eqnarray}
\end{lemma}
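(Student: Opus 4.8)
The plan is to work in the two surface coordinates $(s,\theta)$ coming from the parametrization $\bx(s,\theta)=\lp r(s)\cos\theta,\, r(s)\sin\theta,\, z(s)\rp$ and to reduce both identities to the classical intrinsic formulas for the gradient and divergence of a surface written in orthogonal coordinates. First I would compute the coordinate tangent vectors $\partial_s\bx = r'\er+z'\ez$ and $\partial_\theta\bx = r\,\et$, from which the first fundamental form is read off as the diagonal metric with $g_{ss}=r'^2+z'^2$, $g_{\theta\theta}=r^2$, and $g_{s\theta}=0$. The vanishing of the off-diagonal entry (a one-line dot-product check) is what makes $(s,\theta)$ an orthogonal coordinate system and keeps all subsequent expressions diagonal. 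The associated unit tangent fields are then $\bta = \partial_s\bx/\sqrt{r'^2+z'^2}$ and $\btb = \et = \partial_\theta\bx/r$.

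For the gradient I would invoke the standard expression ${\rm Grad}\,\sigma = g^{ss}(\partial_s\sigma)\,\partial_s\bx + g^{\theta\theta}(\partial_\theta\sigma)\,\partial_\theta\bx$ with $g^{ss}=1/(r'^2+z'^2)$ and $g^{\theta\theta}=1/r^2$. Rewriting $\partial_s\bx$ and $\partial_\theta\bx$ in terms of the unit fields converts this to ${\rm Grad}\,\sigma = \frac{1}{\sqrt{r'^2+z'^2}}(\partial_s\sigma)\bta + \frac{1}{r}(\partial_\theta\sigma)\btb$. Substituting the Fourier series \eqref{expan1} and using $\partial_\theta e^{\mi m\theta}=\mi m\, e^{\mi m\theta}$ termwise then yields the claimed modal expansion of ${\rm Grad}\,\sigma$ directly.

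For the divergence I would first re-express $\bJ = J^1\bta + J^2\btb$ in the coordinate (contravariant) basis as $\bJ = \frac{J^1}{\sqrt{r'^2+z'^2}}\partial_s\bx + \frac{J^2}{r}\partial_\theta\bx$, and then apply the intrinsic divergence formula ${\rm Div}\,\bJ = \frac{1}{\sqrt g}\lp\partial_s(\sqrt g\,\tilde J^s)+\partial_\theta(\sqrt g\,\tilde J^\theta)\rp$ with $\sqrt g = r\sqrt{r'^2+z'^2}$ and $(\tilde J^s,\tilde J^\theta)$ the contravariant components just identified. The metric factors collapse because $\sqrt g\,\tilde J^s = rJ^1$ and $\sqrt g\,\tilde J^\theta = \sqrt{r'^2+z'^2}\,J^2$, and since $\sqrt{r'^2+z'^2}$ depends on $s$ but not on $\theta$ the $\theta$-derivative leaves it untouched. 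This gives ${\rm Div}\,\bJ = \frac{1}{r\sqrt{r'^2+z'^2}}\partial_s(rJ^1) + \frac{1}{r}\partial_\theta J^2$, and inserting the Fourier series \eqref{expan2} termwise produces the stated formula. As a consistency check one can verify that this pair of formulas satisfies the integration-by-parts identity \eqref{iden1}.

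The computations are all elementary, so the only real care needed is in the bookkeeping. The main obstacle is keeping the distinction between the \emph{physical} (unit-normalized) components $J^1,J^2$ used in the statement and the \emph{contravariant} coordinate components that enter the intrinsic divergence formula, together with correctly tracking where the metric factor $\sqrt{r'^2+z'^2}$ does and does not survive differentiation. Because the parameter $s$ is not assumed to be arclength, these $\sqrt{r'^2+z'^2}$ factors must be carried through every step rather than set to one, which is precisely where an error would most easily creep in.
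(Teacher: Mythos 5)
Your proposal is correct: the paper states this lemma without proof (it is treated as a direct computation), and your derivation via the orthogonal surface coordinates $(s,\theta)$ — reading off the diagonal metric $g_{ss}=r'^2+z'^2$, $g_{\theta\theta}=r^2$, applying the intrinsic formulas ${\rm Grad}\,\sigma=g^{ij}\partial_j\sigma\,\partial_i\bx$ and ${\rm Div}\,\bJ=\tfrac{1}{\sqrt g}\partial_i(\sqrt g\,\tilde J^i)$, and differentiating the Fourier series termwise — is exactly the standard computation the authors implicitly rely on, consistent with their conventions $\bta=\partial_s\bx/\sqrt{r'^2+z'^2}$ and $\btb=\et$. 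Your care in distinguishing the unit-normalized components $J^1,J^2$ from the contravariant ones, and in not assuming $s$ is arclength, is precisely what makes the bookkeeping come out right.
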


We first give the expression for ${\rm Grad}_\by(\bn(\bx)\cdot \bn(\by))$ and ${\rm Div}_\by (\bn(\by)\times \boldsymbol{\tau}_i(\bx) \times \bn(\by))$, $i=1,2$. Since  $$\bn(\bx)= C_t\left(z'(t)(\cos \theta_t, \sin\theta_t,0)-r'(t)(0,0,1)\right),$$
$$\bn(\by)= C_s\left(z'(s)(\cos \theta, \sin\theta,0)-r'(s)(0,0,1)\right),$$
where $C_s = 1/\sqrt{r'(s)^2+z'(s)^2}$, $ C_t = 1/\sqrt{r'(t)^2+z'(t)^2}$, it holds
\begin{eqnarray*}
	&&{\rm Grad}_\by(\bn(\bx)\cdot \bn(\by)) \\
	&=& C_s \lp C_tC'_s(z'(t)z'(s)\cos(\theta_t-\theta)+r'(t)r'(s))
	+C_tC_s(z'(t)z''(s)\cos(\theta_t-\theta)+r'(t)r''(s))\rp\bta \\
	&+&1/r(s)C_tC_s z'(t)z'(s)\sin(\theta_t-\theta)\btb.
\end{eqnarray*}

Similarly, since
\begin{eqnarray}
\bta(\bx)&=& C_t\left(r'(t)(\cos \theta_t, \sin\theta_t,0)+z'(t)(0,0,1)\right), \quad \btb(\bx)= (-\sin \theta_t, \cos\theta_t,0),\notag \\ 
\bta(\by)&=& C_s\left(r'(s)(\cos \theta, \sin\theta,0)+z'(s)(0,0,1)\right), \quad \btb(\by)= (-\sin \theta, \cos\theta,0),\notag 
\end{eqnarray}
by Lemma \ref{surfdiv}, it holds
\begin{eqnarray*}
	&&{\rm Div}_\by (\bn(\by)\times \bta(\bx) \times \bn(\by)) \\
	&=&C_s/r(s) \big[ r'(s)C_tC_s\big(r'(t)r'(s)\cos(\theta_t-\theta)+z'(t)z'(s)\big) \\
	&+&r(s)C_tC'_s\big(r'(t)r'(s)\cos(\theta_t-\theta)+z'(t)z'(s)\big) \\ &+&r(s)C_tC_s\big(r'(t)r''(s)\cos(\theta_t-\theta)+z'(t)z''(s)\big)\big] \\
	&-&1/r(s)C_tr'(t)\cos(\theta_t-\theta),
\end{eqnarray*}
 and
\begin{eqnarray*}
	&&{\rm Div}_\by (\bn(\by)\times \btb(x) \times \bn(\by)) \\
	&=&C_s/ r(s) \left[-r'(s)C_s r'(s)\sin(\theta_t-\theta)-r(s)(C'_sr'(s)+C_sr''(s))\sin(\theta_t-\theta) \right]\\&+&1/r(s)\sin(\theta_t-\theta).
\end{eqnarray*}

To numerically construct the surface gradient operator for an unknown function $\sigma$, we make use of the fact that $\sigma$ is discretized on $p$ scaled Gauss-Legendre nodes on each panel as discussed in Section~\ref{kerneleval}.  Following the notation in Section~\ref{kerneleval}, $\sigma$ on the $i$th panel $\gamma_i$ is approximated by
$$\sigma(\by) \approx \sum_{j=1}^pc_{ij}P^i_j(\by)= \sum_{j=1}^p\sum_{n=1}^p U^{i}_{jn} \, \sigma(\by_{in}) P^i_j(\by). $$  
Therefore the surface divergence can be approximated as
$${\rm Grad}_\by\sigma(\by) \approx \sum_{j=1}^p \sum_{n=1}^p U^{i}_{jn} \, \sigma(\by_{in}) {\rm Grad}_\by P^i_j(\by)=\sum_{n=1}^p \sum_{j=1}^p U^{i}_{jn}{\rm Grad}_\by P^i_j(\by) \, \sigma(\by_{in}) .$$
By taking $\by$ to be the $p$ scaled Gauss-Legendre nodes on the $i$th panel, we obtain the discretized surface gradient operator for $\sigma$. The discretization of surface divergence operator for a vector function $\bJ$ can be similarly constructed. 

Combining the results above with Lemmas \ref{lemmascalpot} and \ref{lemmavecpot}, we  are able to obtain the azimuthal Fourier decomposition of boundary operators $\mathcal{H}$ and $\mathcal{N}$. Details are omitted.

\end{document}